\newtheorem{theorem}{Theorem}
\newtheorem{remark}{Remark}
\newtheorem{assumption}{Assumption}
\renewcommand{\thetable}{\thesection.\arabic{table}}
\renewcommand{\theequation}{\thesection.\arabic{equation}}
\renewcommand{\thefigure}{\thesection.\arabic{figure}}
\begin{document}


\title{Discontinuous Galerkin method based on the reduced space for the nonlinear convection-diffusion-reaction equation}
\author{Shijin Hou\thanks{School of Mathematical Sciences, University of Science and Technology of China, Hefei, Anhui
230026, People's Republic of China. Email: {\tt{houshiji@mail.ustc.edu.cn}}.}
\and Yinhua Xia\thanks{School of Mathematical Sciences, University of Science and Technology of China, Hefei, Anhui
	230026, People's Republic of China. Email: {\tt{yhxia@ustc.edu.cn}}. This author was partially supported by National Natural Science Foundation of China  grant No. 12271498}}
\date{}
\maketitle

\begin{abstract}
  In this paper, by introducing a reconstruction operator based on the Legendre moments, we construct a reduced discontinuous Galerkin (RDG) space that could achieve the same approximation accuracy but using fewer degrees of freedom (DoFs) than the standard discontinuous Galerkin (DG) space. The design of the ``narrow-stencil-based'' reconstruction operator can preserve the local data structure property of the high-order DG methods. With the RDG space, we apply the local discontinuous Galerkin (LDG) method with the implicit-explicit time marching for the nonlinear unsteady convection-diffusion-reaction equation, where the reduction of the number of DoFs allows us to achieve higher efficiency.  In terms of theoretical analysis, we give the well-posedness and approximation properties for the reconstruction operator and the $L^2$ error estimate for the semi-discrete LDG scheme. Several representative numerical tests demonstrate the accuracy and the performance of the proposed method in capturing the layers. 
\end{abstract}

\textbf{Keywords}:   reduced discontinuous Galerkin space, Legendre moments, local discontinuous Galerkin method, unsteady convection-diffusion-reaction equation.


\section{Introduction}
In this paper, we consider the following nonlinear unsteady convection-diffusion-reaction (CDR) equation
\begin{subequations}\label{CDR equation with initial solution}
	\begin{align}
	&u_t+\nabla\cdot(\pmb b(\pmb x) f(u))-\varepsilon\Delta u+r(u) = g(\pmb x,t),\ (\pmb x,t)\in \Omega\times (0,T]),\label{CDR equation}\\
	&u(\pmb x,0) = u_0(\pmb x),\ \pmb x\in \overline{\Omega},
\end{align}
\end{subequations}	
where the diffusion velocity $\varepsilon$ is positive, the convection velocity field $\pmb b(\pmb x)$, $f(u)$, $r(u)$ and $g(\pmb x,t)$ are smooth, the initial solution $u_0$ belongs to $L^2(\Omega)$, and $\Omega$ is a bounded domain in $\mathbb{R}^d$ with the dimension $d$. With various appropriate boundary conditions, it yields a well-posed problem. The CDR equation has always received considerable attention as a model for fluid flow and heat transfer problems. It is widely applied in various fields of science and engineering such as chemical process simulation, river pollution, reservoir simulation, financial problems, etc \cite{roos2008robust,safdari2015radial}. Among these applications, a challenging scenario arises when numerically solving a convection-dominated or reaction-dominated type of CDR equation (i.e., $\varepsilon\ll 1$) whose solution may suffer from sharp internal or boundary layers \cite{leveque2007finite}. 
In such cases, the standard finite element method leads to spurious numerical oscillations.
In order to overcome this, a number of stabilized numerical methods have been developed, such as the streamline upwind Petrov-Galerkin (SUPG) \cite{brooks1982streamline,baysal2012stabilized}, the Gaussian radial basis function (RBF) \cite{li2017h,rashidinia2018stable}, etc.

In recent decades, the discontinuous Galerkin (DG) methods have been proposed as a kind of robust, accurate method for numerically solving the convection-dominated problem and have received a lot of research \cite{cockburn2000development}.  It can capture the interior or boundary layers well thanks to two aspects: First, by adopting the discontinuous basis space, DG methods can more flexibly describe the complicated structure of the solution near the layers. Second, the numerical flux naturally guarantees the upwind property. In addition, DG methods have many advantages, such as high
parallel efficiency, easy implementation on complicated geometries, etc. Thus, a range of DG methods has been proposed for the CDR equation. Paul et al. developed the $hp$-version DG method for several second-order partial differential equations with nonnegative characteristic forms \cite{houston2002discontinuous}. Ayuso et al. applied the weighted-residual approach to derive the DG scheme for the steady state CDR equation in \cite{ayuso2009discontinuous}. Nguyen et al. presented the implicit high-order hybridizable DG methods for the time-dependent nonlinear convection-diffusion (CD) equations \cite{nguyen2009implicit}. Cockburn and Shu studied the Runge-Kutta discontinuous Galerkin (RKDG) method for the time-dependent convection-dominated parabolic problems \cite{cockburn2001runge}. As an extension of the RKDG method, they proposed the Local Discontinuous Galerkin (LDG) methods for nonlinear time-dependent CD systems \cite{cockburn1998local}. Xu et al. provided $L^2$ error estimates for the semi-discrete LDG method for nonlinear CD equations \cite{xu2007error}. Wang et al. analyzed the stability and error estimation of the implicit-explicit (IMEX) LDG method for multi-dimensional nonlinear CD equation \cite{wang2016local}. However, all the above DG methods suffer from considerable degrees of freedom (DoFs), which could lead to higher computational costs than the traditional finite element methods (FEMs).

Therefore, several improved methods have been developed to reduce the number of DoFs. Cockburn et al. developed a hybridizable DG method for the steady-state CDR equation \cite{cockburn2009hybridizable}.
Recently, Li et al. proposed a novel approach based on the patch reconstruction discontinuous Galerkin space, in which the arbitrary high-order DG methods have only one degree of freedom per element \cite{li2020least}. This method has been used to solve the steady-state CD equation in \cite{sun2020discontinuous}. Its excellent performance in reducing the number of DoFs is widely recognized. However, high-order reconstruction requires a wide stencil according to the strategy in \cite{li2020least}. In this case, the local data structure property of the DG methods is weakened to some extent.


In this article, we aim to propose an efficient DG method for the nonlinear unsteady CDR equation. Our first contribution is to propose a reconstruction operator using the Legendre moments \cite{yap2005efficient}. Here, we consider a fixed narrow stencil consisting only of the element itself and its direct neighbors, instead of the wide stencil used in \cite{li2020least}, and implement the high-order reconstruction by exploiting the high-order Legendre moments on each element of the stencil. In addition, we study the well-posedness and some approximation properties of this reconstruction operator for our later analysis. Applying this operator, we construct the reduced discontinuous Galerkin (RDG) space which is able to use reduced DoFs to achieve the same approximation accuracy as the standard DG space. It's worth noting that the high-order reconstruction approach based on a narrow stencil can well preserve the local data structure property of DG methods. In light of the above advantages, we can develop some efficient DG methods based on this RDG space.
In the second part, we apply the LDG method based on our RDG space for spatial discretization and provide the error estimation for the semi-discrete LDG scheme. Combined with the implicit-explicit Range-Kutta (IMEX RK) time discretization method, we propose the complete IMEX RK LDG method. In the IMEX RK scheme, the diffusion part is treated implicitly, which avoids the severe step restriction but introduces a large linear system to be solved at each time step, showing that our RDG space can effectively reduce the computational cost.

The paper is organized as follows. In Section \ref{Sec:Notation and preliminaries}, we introduce the notations, definitions, and preliminaries used later in the paper. In Section \ref{Sec:Reduced discontinuous Galerkin space}, we define the compact reconstruction operator and present some approximation properties of it. Using this operator, we also give the definition of the RDG space. In Section \ref{Sec:LDG scheme}, the IMEX RK LDG method is applied to solve the nonlinear CDR equation. In addition, the error estimate is derived in the $L^2$ norm. In Section \ref{Sec:Numerical examples}, we present numerical results for several one-and two-dimensional CDR equations to demonstrate the accuracy. Finally, we draw the conclusion in Section \ref{Sec:Conclusion}.

\section{Notation and preliminaries}\label{Sec:Notation and preliminaries}
For an open and bounded domain $K\subset\mathbb{R}^d$, let $W^{s,p}(K)$ with indexes $s\ge 0$, $1\le p\le \infty$ denote the Sobolev space of functions whose derivatives up to order $s$ belong to the space $L^{p}(K)$. Its seminorm and norm are denoted by $|\cdot|_{s,p,K}$ and $\Vert\cdot\Vert_{s,p,K}$ respectively. In the most common case, $p = 2$, we write $H^s(K)$ instead of $W^{s,2}(K)$ for simplicity and denote the corresponding seminorm and norm by $|\cdot|_{s,K}$ and $\Vert\cdot\Vert_{s,K}$. For $s = 0$, $H^s(K)$ coincides with $L^2(K)$. Hence the norm and inner product of $L^2(K)$ can be denoted by $\Vert\cdot\Vert_{0,K}$ and $(\cdot,\cdot)_K$. Moreover, Let $P^k(K)$ denote the space of polynomials of degree at most $k$ on $K$.

Consider a $d$-dimensional hypercube $\Omega = \prod_{i=1}^d [a_i,b_i]$ and let $\mathcal{T}_h$ denote the rectangular partition with disjoint $N$ elements. Assuming that we distribute $N_{i}$ elements on the $i$th direction, i.e., $[a_i,b_i] = \cup _{j = 1}^{N_i}I_j^i$, where $I^i_{j}=[x^i_{j-\frac{1}{2}},x^i_{j+\frac{1}{2}}]$. Thus we have $N = \prod_{i=1}^dN_i$. Here we define a set of multiple indicators, $\Theta^d=\{\theta = (\theta_{1},\cdots ,\theta_{d}){\big |}1 \le \theta_{i} \le N_{i},i = 1,\cdots,d\}$.
For each $\theta\in \Theta^d$, the $\theta$th element is denoted by $K_\theta = \prod_{i=1}^dI^i_{\theta_i}$.
The corresponding center point is denoted by $\pmb{x}_\theta = (x_{\theta}^1,\cdots,x_{\theta}^d)$ with $x_{\theta}^i = (x^i_{\theta_i-\frac{1}{2}}+x^i_{\theta_i+\frac{1}{2}})/2$. Let $\mathcal{E}_h$ denote the set of all edges of elements in $\mathcal{T}_h$, and $\mathcal{E}_h^0=\mathcal{E}_h\backslash\partial\Omega$ denote the set of interior edges. Moreover, for every $K\in \mathcal{T}_h$, we denote its area by $|K|$ and the element length in $i$th direction by $h_{K}^i$.
Naturally, we can define the maximum  and minimum mesh size by
\[
h = \max_{K\in \mathcal{T}_h,\ 1\le i\le d}h_K^i,\ \underline{h} = \min_{K\in \mathcal{T}_h,\ 1\le i\le d}h_K^i.
\]
We assume that $\mathcal{T}_h$ is regular: With mesh refinements, there always exists a real positive number $\gamma$ independent of $h$ such that the ratio of the maximum and the minimum mesh size can be bounded by $\gamma$, i.e., $h\le \gamma \underline{h}$.

Given the mesh $\mathcal{T}_h$, we denote by $Q^k(K)$ the tensor product
piecewise polynomials of degree at most $k$ in each variable on the element $K$. Then, a piecewise polynomial space can be defined as:
\[V_h^k = \{v\in L^2(\Omega):v|_{K}\in Q^k(K),\ \forall K\in\mathcal{T}_h\},
\]
which is the general discontinuous Galerkin space.

Based on the regularity assumption, there are several useful properties that will be used for the later analysis. For convenience to description, we denote by $C$ a positive constant which may depend on the regularity of the function and the regularity parameter $\gamma$ but is independent of $h$ and adopt it to represent all scaling constants with the same characters in this paper.

\paragraph{Agmon's inequality}For any $v\in H^1(K)$, there exists a positive constant $C$ such that
\begin{equation}\label{Agmon inequality}
	\Vert v \Vert^2_{0,\partial K}\le C\left(h^{-1}\Vert v \Vert^2_{0,K}+h| v |_{1,K}\right).
\end{equation}
\paragraph{Approximation property}For any $v\in H^{k+1}(K)$, there exists an approximation $v_h\in P^k(K)$ satisfying
\begin{equation}\label{Approximation property}
	\Vert v -v_h\Vert_{0,K}+h| v -v_h|_{1,K}\le Ch^{k+1}| v |_{k+1,K}.
\end{equation}
\paragraph{Inverse properties}For any $v_h\in V_h^k$, there exists a positive constant $C$ such that
\begin{subequations}\label{inverse properties}
	\begin{align}
		&\Vert \partial_xv_h \Vert_{0,K}\le Ch^{-1}\Vert v_h \Vert_{0,K},\\
		&\Vert v_h \Vert_{0,\partial K}\le Ch^{-\frac{1}{2}}\Vert v_h \Vert_{0,K},\\
		&\Vert v_h \Vert_{0,\infty,K}\le Ch^{-\frac{d}{2}}\Vert v_h \Vert_{0,K}.
	\end{align}
\end{subequations}
Agmon's inequality (\ref{Agmon inequality}) has been proved in \cite{da2014mimetic}. More details of inverse properties (\ref{inverse properties}) can be seen in \cite{ciarlet2002finite}.

\section{Reduced discontinuous Galerkin space}\label{Sec:Reduced discontinuous Galerkin space}
In this section, we introduce the compact reconstruction operator by employing the Legendre moments on a fixed narrow stencil for the cases $d=1,2$. Moreover, we give the well-posedness condition and some approximation properties for the reconstruction operator, which is critical to the error estimate later. In the end, we define the reduced discontinuous Galerkin (RDG) space by applying this reconstruction operator.

\subsection{Reconstruction operator}\label{sec:Compact reconstruction operator}
Let us start by introducing a fixed narrow element stencil. In the case of one dimension, we select the element itself and its two neighbor elements as the stencil. Given the mesh $\mathcal{T}_h = \{K_j\}_{j=1}^N$, the element stencil for periodic boundary value problems can be denoted as follows:
\begin{equation*}
	\begin{aligned}
   		&S^1(K_j) = \{K_{j-1},K_j,K_{j+1}\}, \ j = 1,\cdots,\ N,\\
	\end{aligned}
\end{equation*}
where $K_{0}, K_{N+1}$ are obtained by the periodic extension. For other non-periodic boundary conditions, we denote the following bias stencil
\begin{equation*}
	\begin{aligned}
		&S^1(K_1) = \{K_1,K_2,K_3\},\\
   		&S^1(K_j) = \{K_{j-1},K_j,K_{j+1}\}, \ j = 2,\cdots,\ N-1,\\
		&S^1(K_N) = \{K_{N-2},K_{N-1},K_{N}\}.
	\end{aligned}
\end{equation*}
For two dimensions, the stencil is given by the tensor product of the one-dimensional stencil defined above. Consider the mesh $\mathcal{T}_h = \{K_\theta|\ \theta\in \Theta^2\}$, where $K_\theta =I^1_{\theta_1}\times I^2_{\theta_2}$ as denoted in the previous section.
Here, we can define the one-dimensional stencil for the first direction by $\{S^1(I^1_{\theta_1})\}_{\theta_1=1}^{N_1}$, similarly, $\{S^1(I^2_{\theta_2})\}_{\theta_2=1}^{N_2}$ for the second direction. The two-dimensional stencil is defined by 
\[
S^2(K_\theta) = S^1(I^1_{\theta_1})\times S^1(I^2_{\theta_2}).
\]
In what follows, for any $K\in \mathcal{T}_h$, we denote the stencil by $S(K)$ without distinguishing one or two dimensions.

Given the standard Legendre basis functions $\{\widehat{L}^i(\widehat{x})\}_{i = 0}^k$ \footnote{The $k$th order Legendre polynomials are defined as: $\widehat{L}^k(\widehat{x}) = \frac{(-1)^k}{2^kk!}(\frac{d}{dx})^k[(1-x^2)^k]$ \cite{abramowitz1964handbook}.} for $\widehat{x}\in[-1,1]$, we denote the multiple dimensional Legendre basis functions of degree $(\sum_{i = 1}^d\alpha_i)$ on element $K$ by
\[
L_{K}^\alpha(\pmb x) = \prod_{i = 1}^d\widehat{L}^{\alpha_i}\left(\frac{2(x_i-x_{\theta_i})}{h_K^i}\right).
\]
Here $\alpha = (\alpha_1,\dots,\alpha_d)$ is a multiple indicator belonging to $A^k =  \{(\alpha_1,\cdots, \alpha_d)|\ 0\le\alpha_i\le k\}$.
For $\alpha \in A^k$ and the element $K$, the Legendre moments of order $(\sum_{i = 1}^d\alpha_i)$ for a function $u\in C^0(K)$ are defined as \cite{yap2005efficient}:
\[
I^{\alpha}_{K}u(\pmb x) = \left(\prod_{i = 1}^d\frac{2\alpha_i+1}{h_K^j}\right)\int_{K}L_{K}^\alpha(\pmb{x})u(\pmb{x})d\pmb{x}.
\]
These Legendre moments can provide sufficient information to determine an interpolating approximation for $u$. By simple computation, we can verify that the following polynomial $p(\pmb x)\in Q^k(K)$ with the Legendre series expansion form 
\[
p(\pmb x) = \sum_{\alpha\in A^k} I_K^\alpha u(\pmb x) \prod_{i = 1}^d\widehat{L}^{\alpha_i}\left(\frac{2(x_i-x_{\theta_i})}{h_K^i}\right)
\]
is a $(k+1)$th order approximation for $u$ in the $L^2$ norm. The coefficients of $p(\pmb x)$ are determined by solving an approximation problem as follows
\[
	I^\alpha_{K}p(\pmb x) = I^\alpha_{K}u(\pmb x),\ \forall \alpha\in A^k.
\]

We aim to define a local reconstruction operator $R_{K}^k$ which generates a $(k+1)$th order approximation polynomial $R_{K}^ku\in P^k(S(K))$ for a piecewise continuous function $u$.
Inspired by the above interpolation problem, we consider using several low-order Legendre moments on each element of the stencil $S(K)$ to determine the high-order approximation polynomial $R_{K}^ku$.
As a result, the local reconstruction operator $R_{K}^k$ can be defined by solving the following approximation problem: to find $R_{K}^ku\in Q^k(S(K))$ such that
\begin{equation}\label{system of local reconstruction operator}
	I^\alpha_{K_\theta}R_{K}^ku = I^\alpha_{K_\theta}u,\ \forall K_\theta\in S(K),\ \alpha\in A^m,\ m<k.
\end{equation}
Given the stencil, we can make sure the system \eqref{system of local reconstruction operator} is determined by increasing the order of Legendre moments, $m$.
In the next section, we will discuss in detail the relation of $m$ and $k$ and give the well-posedness of the defined reconstruction operator. The proposal of this reconstruction operator, which requires only a narrow stencil, is one of the main contributions of this paper.

After that, the global reconstruction operator $R^k: V_h^m\to V_h^k$ is naturally defined piecewise: let the restriction of global approximation $R^ku$ be the corresponding local approximation $R_{K}^ku$, i.e.,
\begin{equation}\label{global approximation operator}
(R^ku)|_{K} = R_{K}^ku,\ \forall K\in \mathcal{T}_h.
\end{equation}
We can observe that the global reconstruction operator $R^k$ embeds the piecewise polynomial space $V_h^m$ into a higher order piecewise polynomial $V_h^k$. Let the embedded space be denoted by $U_h^k = R^kV_h^m$.

\subsection{Properties for the reconstruction operator}\label{Properties for Compact reconstruction operator}
For a better application of the reconstruction operator $R^k$, we would like to analyze its properties. Let us begin with a problem left over from the last section: the well-posedness of the local reconstruction operator $R^k_K$. It is equivalent to the existence and uniqueness of the solutions for the approximation problem \eqref{system of local reconstruction operator}, which can be deduced from the following assumption directly.
\begin{assumption}\label{admissible assumption}
	For every $K\in \mathcal{T}_h$ and $w\in Q^k(S(K))$, if the Legendre moments satisfy that
	\begin{equation}\label{admissible assumption equation}
	I^\alpha_{K_\theta}w=0,\ \forall K_\theta\in S(K),\ \alpha\in A^m,
	\end{equation}
	we must have $w|_{S(K)}\equiv 0$.
\end{assumption}
For the cases of $k = 2,5$, the detailed numerical analysis is presented in Appendix \ref{The numerical analysis of well-posedness}. We can observe that Assumption \ref{admissible assumption} holds when satisfying $(k+1)/(m+1) = 3$, where $3$ is the width of the stencil. For instance, the reconstruction operator $R^5_{K}$ demands a set of Legendre moments $\{I^\alpha_{K_\theta}u\}$ with $\alpha\in A^1$ and $K_\theta\in S(K)$. In the following sections, we call the relation $(k+1)/(m+1) = 3$ well-posed condition. Remark that the same condition is expected to be derived for the higher-order cases.

Under Assumption \ref{admissible assumption}, we can define a kind of $l_\infty$ norm for any $u\in Q^k(S(K))$ based on the Legendre moments as follows,
\[
\Vert u\Vert_{l_{\infty}(S(K))} = \max_{K_\theta\in S(K),\alpha\in A^m}|I^\alpha_{K_\theta}u|.
\]
On one side, the equivalence of the norms over finite dimensional space $Q^k(S(K))$ leads to the following property
\begin{equation}\label{norms equivalence}
	\Vert u\Vert_{0,\infty,S(K)}\le C\Vert u\Vert_{l_{\infty}(S(K))},\ \forall u\in Q^k(S(K)).
\end{equation}
On the other side, based on the boundedness of Legendre functions $L_{K_\theta}^\alpha$, we can find an upper bound $C$ such that 
\[
\max_{K_\theta\in S(K),\alpha\in A^m}|I^\alpha_{K_\theta}1|\le C.
\]
Therefore, for any $u\in C^0(S(K))$, we have
\begin{equation}\label{upper bound for l_infty discrete norm}
	\Vert u\Vert_{l_{\infty}(S(K))}\le C\Vert u\Vert_{0,\infty,(S(K))}.
\end{equation}
With the help of this norm, in Appendix \ref{Proof of Theorem1}, we provide the proof of the following theorem where the approximation properties of the local reconstruction operator are presented.
\begin{theorem}\label{Theorem 1}
	If Assumption \ref{admissible assumption} holds,
	the $k$-exactness property holds for any $K\in \mathcal{T}_h$ as\\
	\begin{equation}\label{k-exactness property for local reconstruction operator}
		R^k_{K}u = u,\  \forall\ u\in Q^k(S(K_j)).
	\end{equation}
	Moreover, for any $K\in \mathcal{T}_h$ the local approximation $R^k_{K}u$ with $u\in H^{k+1}(S(K))$ satisfies the $L^\infty$ error estimate as
	\begin{equation}\label{l_infty error estimate for local reconstruction operator}
		\Vert u-R_{K}^ku\Vert_{0,\infty,K} \le Ch^{k+1-\frac{d}{2}},
	\end{equation}
	and the $L^2$ error estimate holds as follows
	\begin{equation}\label{l2 error estimate for local reconstruction operator}
		\Vert u-R_{K}^ku\Vert_{0,K} \le Ch^{k+1},
	\end{equation}
	\begin{equation}\label{sime norm error estimate for local reconstruction operator}
		|u-R_{K}^ku|_{1,K} \le Ch^k,
	\end{equation}
	where $C$ depends on $|u|_{H^{k+1}(S(K))}$ but is independent of $h$.
\end{theorem}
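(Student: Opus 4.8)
The plan is to obtain all four statements on top of the $k$-exactness property \eqref{k-exactness property for local reconstruction operator}, which is itself an immediate consequence of uniqueness. Given $u\in Q^k(S(K))$, set $w = R_K^k u - u$; both terms lie in $Q^k(S(K))$, and since $R_K^k u$ is defined by matching the moments $I^\alpha_{K_\theta}$ of $u$ in \eqref{system of local reconstruction operator}, we get $I^\alpha_{K_\theta} w = 0$ for every $K_\theta\in S(K)$ and $\alpha\in A^m$. Assumption \ref{admissible assumption} then forces $w\equiv 0$. (The same reasoning shows that \eqref{system of local reconstruction operator} has at most one solution, so $R_K^k$ is well defined and linear.)

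For the three error bounds I would exploit $k$-exactness through the splitting $u - R_K^k u = (u-p) - R_K^k(u-p)$, valid for any $p\in Q^k(S(K))$ because $R_K^k p = p$. Here $p$ is a good polynomial approximation of $u$ on the patch $S(K)$, which is itself a rectangle of diameter $O(h)$ by the tensor-product construction and the mesh regularity, so that \eqref{Approximation property} gives $\|u-p\|_{0,S(K)}+h|u-p|_{1,S(K)}\le Ch^{k+1}|u|_{k+1,S(K)}$. The whole difficulty is thus transferred to controlling the polynomial $R_K^k(u-p)$, and this is where the moment structure enters. By the norm equivalence \eqref{norms equivalence} and the moment-matching definition of $R_K^k$, $\|R_K^k(u-p)\|_{0,\infty,S(K)}\le C\|R_K^k(u-p)\|_{l_{\infty}(S(K))} = C\max_{K_\theta,\alpha}|I^\alpha_{K_\theta}(u-p)|$. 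Each moment is then estimated by Cauchy--Schwarz, $|I^\alpha_{K_\theta}(u-p)|\le Ch^{-d/2}\|u-p\|_{0,K_\theta}$, using $\|L^\alpha_{K_\theta}\|_{0,K_\theta}\sim h^{d/2}$ against the $h^{-d}$ prefactor in the definition of $I^\alpha_{K_\theta}$. Combining with the approximation bound yields $\|R_K^k(u-p)\|_{0,\infty,S(K)}\le Ch^{k+1-d/2}|u|_{k+1,S(K)}$, and via $\|\cdot\|_{0,K}\le |K|^{1/2}\|\cdot\|_{0,\infty,K}$ the companion bound $\|R_K^k(u-p)\|_{0,K}\le Ch^{k+1}|u|_{k+1,S(K)}$.

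The three estimates then follow by the triangle inequality applied to the splitting. For \eqref{l2 error estimate for local reconstruction operator} I add $\|u-p\|_{0,K}\le Ch^{k+1}|u|_{k+1,S(K)}$ to the $L^2$ bound on $R_K^k(u-p)$. For \eqref{l_infty error estimate for local reconstruction operator} I add the $L^\infty$ bound on $R_K^k(u-p)$ to $\|u-p\|_{0,\infty,K}$; the latter is the one genuinely new ingredient, obtained from the $L^\infty$ analogue of \eqref{Approximation property} (equivalently, rescaling the Bramble--Hilbert lemma together with the embedding $H^{k+1}(S(K))\hookrightarrow C^0$, valid for $d\le 2$), which again produces the order $Ch^{k+1-d/2}|u|_{k+1,S(K)}$. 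For the seminorm \eqref{sime norm error estimate for local reconstruction operator} I add $|u-p|_{1,K}\le Ch^k|u|_{k+1,S(K)}$ to $|R_K^k(u-p)|_{1,K}\le Ch^{-1}\|R_K^k(u-p)\|_{0,K}\le Ch^k|u|_{k+1,S(K)}$, where the inverse inequality \eqref{inverse properties} converts the already-established $L^2$ control of $R_K^k(u-p)$ into a gradient bound.

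The main obstacle I anticipate is not the algebra but ensuring that every constant is genuinely uniform in $h$ and $K$. The norm equivalence \eqref{norms equivalence}, the estimate $\|L^\alpha_{K_\theta}\|_{0,K_\theta}\sim h^{d/2}$, and the moment Cauchy--Schwarz bound all hold on a single fixed stencil, and their transfer to an arbitrary $S(K)$ with an $h$-independent constant must be justified by rescaling to a reference stencil and invoking the regularity bound $h\le\gamma\underline{h}$; this is precisely what keeps the aspect ratios of $S(K)$ under control so that the finite-dimensional norm-equivalence constant does not degenerate. Verifying this uniformity, and checking that the biased boundary stencils are covered by the same scaling argument, is the step that deserves the most care.
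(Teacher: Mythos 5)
Your proposal is correct and takes essentially the same route as the paper's proof: $k$-exactness from Assumption \ref{admissible assumption}, a splitting of $u-R_K^ku$ through a polynomial approximant on $S(K)$, control of the reconstructed part via the moment-preservation (projection) property together with the norm equivalence \eqref{norms equivalence}, and finally triangle plus inverse inequalities for \eqref{l2 error estimate for local reconstruction operator}--\eqref{sime norm error estimate for local reconstruction operator}. The only cosmetic differences are that you bound the moments of $u-p$ by Cauchy--Schwarz against $\Vert u-p\Vert_{0,K_\theta}$ where the paper instead uses the averaged Taylor polynomial and the bound \eqref{upper bound for l_infty discrete norm}, and your closing remark on the $h$-uniformity of the equivalence constant (handled by rescaling and the regularity bound $h\le\gamma\underline{h}$) makes explicit a point the paper leaves implicit.
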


\subsection{The reduced discontinuous Galerkin space}
In this subsection, we would like to further explore the approximation space $U_h^k$. 
For all $K\in \mathcal{T}_h,\alpha\in A^m$, consider the following locally supported functions defined on the domain $\Omega$
\begin{equation}\label{local basis function for V}
	v_{K}^\alpha =
	\left\{
	\begin{aligned}
		&L_{K}^\alpha(\pmb x),\ \pmb x\in K,\\
		& 0,\ {\rm elsewhere}.
	\end{aligned}
	\right.
\end{equation}
They span the piecewise polynomial space $V_h^m$ completely. By applying the reconstruction operator $R^k$ on each function $v_{K}^\alpha$, we can define the basis function of space $U_h^k$ by
\begin{equation}\label{basis function of RDG space}
\phi_{K}^\alpha = R^kv_{K}^\alpha.
\end{equation}
Therefore, any approximation function $R^ku\in U_h^k$ can be expressed by the basis expansion:
\[
R^ku = \sum_{K\in \mathcal{T}_h,\alpha\in A^m}(I_{K}^\alpha u)\phi_{K}^\alpha.
\]

By the definitions \eqref{global approximation operator}, \eqref{local basis function for V}, and \eqref{basis function of RDG space}, we know that the basis function $\phi_{K}^\alpha$ belongs to the piecewise polynomial space $V_h^k$ and has a compactly supported set $S(K)$. With the relation of $m$ and $k$ introduced in Section \ref{Properties for Compact reconstruction operator}, we can
conclude that the space $U_h^k$ of cardinality $N(m+1)^d$ certainly is the subspace of the general piecewise polynomial space $V_h^k$ and the reduction of $1/3^d$ degree of freedom can be appreciated.
According to \cite{hughes2000comparison}, the number of DoFs can be regarded as a proper indicator of the efficiency in a specific discrete system. From this point of view, the DG methods based on the RDG space can achieve higher efficiency. Moreover, our narrow-stencil reconstruction method ensures the local property of these basis functions, which prevents the local property of DG approaches from being destroyed. 

Up to this point, we have defined an approximation space $U_h^k$ with reduced cardinality but the same accuracy as the standard DG space $V_h^k$, called the reduced discontinuous Galerkin (RDG) space. Considering the above properties of the RDG space, it is easy to implement various DG methods with our RDG space.

\section{The LDG method with the RDG space}\label{Sec:LDG scheme}
\subsection{The semi-discrete LDG scheme}
Here, we only focus on the periodic boundary condition for simplicity. Notice that the analysis can be extended to other non-periodic boundary conditions \cite{wang2018third}.  It is worth noting that the RDG space is not limited to the LDG method, but can be applied to other DG methods as well.

To describe the LDG method, we begin with the following first-order system equivalent to the CDR equation \eqref{CDR equation}:
\begin{equation*}
\left\{
\begin{aligned}
		&u_t+\nabla\cdot (\pmb b(\pmb x)f(u)-\sqrt{\varepsilon}\pmb q)+r(u) = g(\pmb x,t),\\
		&\pmb q = \sqrt{\varepsilon}\nabla u.
	\end{aligned}
	\right.
\end{equation*}
Given the rectangular mesh $\mathcal{T}_h$, the functions in the RDG space are piecewise continuous,
whose behavior on the set of edges $\mathcal{E}_h$ may be undefined. Here we introduce some definitions to handle this case. Let $e\in \mathcal{E}_h$ be the edge of any element $K\in \mathcal{T}_h$. For a scalar function $u_h\in U_h^k$, we denote its two traces on $e$ along the positive and negative direction of the coordinate axis by $u_h^+$ and $u_h^-$. The jump and the mean of function $u_h$ at edge $e$ are denoted by $[u_h] = u_h^+-u_h^-$ and $\bar{u}_h = \frac{1}{2}(u_h^++u_h^-)$. We denote by $\pmb n^+$ the unit normal at $e$ of $K$ along the positive direction of the coordinate axis and denote by $\pmb n^-$ its reverse. For a vector function $\pmb u_h\in (U_h^k)^d$, the jump of it is denoted by $[\pmb u_h] = \pmb u_h^+\cdot\pmb n^++\pmb u_h^-\cdot\pmb n^-$. Let $\pmb n = (n_1,\dots,n_d)$ denote the unit outward normal on the boundary $\partial K$, and $(\cdot,\cdot)_K$ and $\langle\cdot,\cdot\rangle_{\partial K}$ denote the inner product on element $K$ and boundary $\partial K$ respectively.

The semi-discrete LDG scheme can be defined: find $u_h\in U_h^k$ and $\pmb q_h = (q_h^1,\dots,q_h^d)$ $\in(U_h^k)^d$ such that, for any $t>0$ and $K\in\mathcal{T}_h$, we have
\begin{equation}\label{weak formulation(WF)}
	\left\{
	\begin{aligned}
		&((u_h)_t,v_h)_K=\mathcal{H}_K(u_h,v_h)+\mathcal{L}_K(\pmb q_h,v_h)+\mathcal{R}_K(u_h,v_h)+\mathcal{G}_K(\pmb x,t,v_h),\ \forall v_h\in U_h^k,\\
		&(q_h^i,p_h^i)_K = \mathcal{K}_K^i(u_h,p_h^i),\ \forall p_h^i\in U_h^k,\ i = 1,\dots, d.
	\end{aligned}
	\right.
\end{equation}
Here, all operators in the above weak formulation are defined as follows,
\begin{subequations}\label{CDR operators}
\begin{align}
\mathcal{H}_K(u_h,v_h) &= \left((\pmb b(\pmb x)f(u_h),\nabla v_h)_K-\langle\pmb b(\pmb x)\hat{f}(u_h)\cdot \pmb n,v_h\rangle_{\partial K}\right),\label{WF:convection}\\
\mathcal{L}_K(\pmb q_h,v_h) &= -\sqrt{\varepsilon}\left((\pmb q_h,\nabla v_h)_K-\langle \hat{\pmb q}_h\cdot \pmb n,v_h\rangle_{\partial K}\right),\label{WF:diffuction1}\\
\mathcal{K}_K^i(u_h,p_h^i) &= -\sqrt{\varepsilon}\left((u_h,(p_h^i)_{x_i})_K-\langle\hat{u}_h, p_h^in_i\rangle_{\partial K}\right),\label{WF:diffuction2}\\
\mathcal{R}_K(u_h,v_h) &= -(r(u_h),v_h)_K,\label{WF:reaction}\\
\mathcal{G}_K(\pmb x,t,v_h) &= (g(\pmb x,t),v_h)_K,\label{WF:force term}
\end{align}
\end{subequations}
where $\hat{f}(u_h)$, $\hat{\pmb q}_h$, and $\hat{u}_h$ are the boundary terms, which are called numerical flux. To guarantee stability, they need specific designs with the two-sided traces on the boundary. Following the choice strategy described in \cite{cockburn1998local,xu2007error}, we take the monotone flux for the convection part and the alternating flux for the diffusion part, defined as
\begin{equation*}
\hat{f}(u_h) = \hat{f}(u_h^-,u_h^+),\ \hat{\pmb q}_h = \pmb q_h^+,\ \hat{u}_h = u_h^-.
\end{equation*}
Note that there are several well-known examples of monotone fluxes $\hat{f}(u_h^-,u_h^+)$, such as the Lax-Friedrichs flux, Godunov flux, and Engquist-Osher flux, etc \cite{cockburn1989tvb}. For the convenience of error analysis, the numerical flux can be written in the following viscosity form
\begin{equation*}
\hat{f}(u_h^-,u_h^+) = \frac{1}{2}\left(f(u_h^-)+f(u_h^+)-\alpha(u_h^-,u_h^+)(u_h^+-u_h^-)\right),
\end{equation*}
with the assumption $\alpha(u_h^-,u_h^+)\ge\alpha_0$ where $\alpha_0$ is a positive number.

Here we denote the global inner product by
\begin{equation*}
(u,v) = \sum_{K\in \mathcal{T}_h}(u,v)_K,\ \langle u,v\rangle = \sum_{e\in \mathcal{E}_h}\langle u,v\rangle_{e}.
\end{equation*}
By summing up the equation \eqref{weak formulation(WF)} over $K\in \mathcal{T}_h$, we obtain the global weak formulation, as follows,
\begin{equation}\label{global weak formulation(gWF)}
	\left\{
	\begin{aligned}
		&((u_h)_t,v_h)=\mathcal{H}(u_h,v_h)+\mathcal{L}(\pmb q_h,v_h)+\mathcal{R}(u_h,v_h)+\mathcal{G}(\pmb x,t,v_h),\ \forall v_h\in U_h^k,\\
		&(q_h^i,p_h^i) = \mathcal{K}^i(u_h,p_h^i),\ \forall p_h^i\in U_h^k,\ i = 1,\dots, d,
	\end{aligned}
	\right.
\end{equation}
where $\mathcal{H},\ \mathcal{L},\ \mathcal{R},\ \mathcal{G},\ \mathcal{K}^i$ are the global operators defined by summing up the corresponding local operators of formulation \eqref{CDR operators} over all elements $K\in \mathcal{T}_h$.
\subsection{Error estimate for the semi-discrete LDG scheme}\label{Error estimate for the RLDG spatial discretization}
The properties in this subsection are the direct extension results in \cite{xu2007error} where we refer the readers for more details. Given the mesh $\mathcal{T}_h$, we first introduce the broken energy space
\[W^{s,p}(\mathcal{T}_h) = \{v\in L^2(\Omega):v|_{K}\in W^{s,p}(K),\ \forall K\in\mathcal{T}_h\},
\]
and denote the corresponding seminorm and norm by
\[|v|_{s,p} = \left(\sum_{K\in\mathcal{T}_h}|v|^p_{s,p,K}\right)^{\frac{1}{p}}
,\ \Vert v\Vert_{s,p} = \left(\sum_{j = 1}^{N}| v|^p_{s,p}\right)^{\frac{1}{p}}.\]
For p = 2, we omit the index $p$ and simply denote these norms by $|v|_s$, and $\Vert v\Vert_s$. Similarly, we can denote the broken energy space on the boundary set by $W^{s,p}(\mathcal{E}_h)$ and its norms by $|v|_{s,p,\mathcal{E}_h}$ and $\Vert v\Vert_{s,p,\mathcal{E}_h}$.

With the above global notation, we present the approximation properties for the global reconstruction operator $R^k$ as follows,
\begin{theorem}
Let $u\in H^{k+1}(\Omega)$, the global approximation $R^ku$ satisfies the following error estimates
	\begin{equation}\label{l2 error estimate for global reconstruction operator}
		\Vert u-R^ku\Vert_{0} \le Ch^{k+1},
	\end{equation}
	\begin{equation}\label{sime norm error estimate for global reconstruction operator}
		|u-R^ku|_{1} \le Ch^k.
	\end{equation}
    \begin{equation}\label{boundary norm and infty norm estimate for global reconstruction operator}
		\Vert u-R^ku\Vert_{0}+h^{\frac{d}{2}}\Vert u-R^ku\Vert_{0,\infty}+ h^{\frac{1}{2}}\Vert u-R^ku\Vert_{0,\mathcal{E}_h}\le Ch^{k+1}.
	\end{equation}
\end{theorem}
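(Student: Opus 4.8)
The plan is to lift the local estimates of Theorem~\ref{Theorem 1} to global ones, using that $R^k$ is defined elementwise through \eqref{global approximation operator}. The one structural fact I need first is a \emph{finite-overlap} property of the stencils: since $S^1(K_j)=\{K_{j-1},K_j,K_{j+1}\}$ in one dimension and $S^2$ is its tensor product, a fixed element $K$ lies in the stencil $S(K')$ of at most $3^d$ neighbouring elements $K'$. Consequently, reading the local estimates in their sharp form (in which the hidden constant is $C\,|u|_{k+1,S(K)}$), summation of the squared local seminorms collapses into a single global one,
\begin{equation*}
\sum_{K\in\mathcal{T}_h}|u|_{k+1,S(K)}^2\le 3^d\sum_{K\in\mathcal{T}_h}|u|_{k+1,K}^2=3^d\,|u|_{k+1,\Omega}^2,
\end{equation*}
which is precisely what converts $h$-uniform local bounds into $h$-uniform global ones without picking up mesh-dependent factors.

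First I would prove \eqref{l2 error estimate for global reconstruction operator} and \eqref{sime norm error estimate for global reconstruction operator}. By the definition of the broken norms,
\[
\Vert u-R^ku\Vert_0^2=\sum_{K\in\mathcal{T}_h}\Vert u-R^k_Ku\Vert_{0,K}^2,\qquad |u-R^ku|_1^2=\sum_{K\in\mathcal{T}_h}|u-R^k_Ku|_{1,K}^2,
\]
and inserting the local estimates \eqref{l2 error estimate for local reconstruction operator} and \eqref{sime norm error estimate for local reconstruction operator} together with the finite-overlap bound above yields $\Vert u-R^ku\Vert_0\le Ch^{k+1}|u|_{k+1,\Omega}$ and $|u-R^ku|_1\le Ch^{k}|u|_{k+1,\Omega}$, as claimed.

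It remains to establish the three-term estimate \eqref{boundary norm and infty norm estimate for global reconstruction operator}; its first summand is the $L^2$ bound just obtained. For the $L^\infty$ summand I would take the maximum over elements, $\Vert u-R^ku\Vert_{0,\infty}=\max_{K}\Vert u-R^k_Ku\Vert_{0,\infty,K}$, so that \eqref{l_infty error estimate for local reconstruction operator} gives $h^{d/2}\Vert u-R^ku\Vert_{0,\infty}\le Ch^{k+1}$ at once. The boundary summand is the crux and the step I expect to cost the most. Writing $\Vert u-R^ku\Vert_{0,\mathcal{E}_h}^2\le\sum_{K}\Vert u-R^k_Ku\Vert_{0,\partial K}^2$ (each edge is shared by at most two elements, so every term on the right is counted and all summands are nonnegative) and applying Agmon's inequality \eqref{Agmon inequality} to $v=u-R^k_Ku\in H^1(K)$ gives
\begin{equation*}
\Vert u-R^k_Ku\Vert_{0,\partial K}^2\le C\bigl(h^{-1}\Vert u-R^k_Ku\Vert_{0,K}^2+h\,|u-R^k_Ku|_{1,K}^2\bigr).
\end{equation*}
Feeding in the local $L^2$ and seminorm estimates, both terms on the right scale like $h^{2k+1}|u|_{k+1,S(K)}^2$; summing over $K$ and invoking finite overlap once more produces $\Vert u-R^ku\Vert_{0,\mathcal{E}_h}\le Ch^{k+1/2}|u|_{k+1,\Omega}$, whence $h^{1/2}\Vert u-R^ku\Vert_{0,\mathcal{E}_h}\le Ch^{k+1}$. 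Adding the three contributions completes the proof; the only delicate point is the consistent bookkeeping of the \emph{sharp} local constants and the overlap count so that the two mismatched $h$-powers inside Agmon's inequality balance out and no spurious factors survive in the aggregation.
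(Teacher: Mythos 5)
Your proposal is correct and takes essentially the same approach as the paper, which likewise obtains \eqref{l2 error estimate for global reconstruction operator} and \eqref{sime norm error estimate for global reconstruction operator} by summing the local estimates of Theorem~\ref{Theorem 1} over all elements and obtains \eqref{boundary norm and infty norm estimate for global reconstruction operator} by combining Theorem~\ref{Theorem 1} with Agmon's inequality \eqref{Agmon inequality}. The finite-overlap bookkeeping of the stencils that you make explicit is left implicit in the paper's (very terse) proof, and is a worthwhile clarification rather than a deviation.
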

\begin{proof}
By summing up all elements, properties \eqref{l2 error estimate for global reconstruction operator} and \eqref{sime norm error estimate for global reconstruction operator} can be derived by the Theorem \ref{Theorem 1} directly. Together with Theorem \ref{Theorem 1} and the Agmon inequality \eqref{Agmon inequality}, the property \eqref{boundary norm and infty norm estimate for global reconstruction operator} can be derived.
\end{proof}

Now, we introduce the error estimate for the semi-discrete LDG scheme. The detailed proof is provided in Appendix \ref{Proof of Theorem3}.
\begin{theorem}\label{l2 estimate for the LDG}
Let $u\in H^{k+1}(\Omega)$ be the exact solution of equation \eqref{CDR equation with initial solution}, and $u_h\in U_h^k$ be the numerical solution of scheme \eqref{global weak formulation(gWF)}. Then for small enough $h$, there exists $C$ such that
	\begin{equation}\label{l2 error estimate for CDR equation}
		\Vert u-u_h\Vert_{0} \le Ch^{k},
	\end{equation}
where $C$ depends on the final time $T$, $k$, $\Vert u\Vert_{H^{k+1}}$, $\Vert \pmb b\Vert_{1,\infty}$, $|\varepsilon|$ and the upper bound of $|f^{(m)}|$ for $m = 1,2,3$.
\end{theorem}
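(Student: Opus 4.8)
The plan is to follow the standard energy-estimate machinery for LDG schemes applied to nonlinear convection-diffusion-reaction equations, as developed in \cite{xu2007error} and \cite{wang2016local}, adapting the argument to the reduced space $U_h^k$. I would first split the error using the global reconstruction operator as an intermediate projection: write $u-u_h = (u-R^ku)-(u_h-R^ku) =: \eta - \xi$, where $\eta = u-R^ku$ is the projection error controlled by the approximation results \eqref{l2 error estimate for global reconstruction operator}--\eqref{boundary norm and infty norm estimate for global reconstruction operator}, and $\xi = u_h - R^ku \in U_h^k$ is the discrete error whose $L^2$ norm is to be bounded. A parallel splitting $\pmb q - \pmb q_h = \pmb\zeta - \pmb\chi$ is used for the auxiliary flux variable. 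The key point here is that because $R^ku$ lies in $U_h^k$ and need not satisfy the exact LDG orthogonality a standard $L^2$ projection would, one must carefully track the projection residual terms; this is where the narrow-stencil reconstruction error estimates of Theorem \ref{Theorem 1} enter.

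Next I would derive the error equation by subtracting the scheme \eqref{global weak formulation(gWF)} from the corresponding weak form satisfied by the exact solution, yielding an identity of the form $((\xi)_t,v_h)+(\text{diffusion and flux terms}) = (\text{nonlinear convection difference})+(\text{reaction difference})+(\text{projection residuals})$. Taking $v_h = \xi$ and $p_h^i = \chi^i$ and summing, the diffusion part together with the alternating-flux choice $\hat{\pmb q}_h = \pmb q_h^+$, $\hat u_h = u_h^-$ produces a nonnegative contribution $\|\pmb\chi\|_0^2$ plus jump terms that telescope and cancel on the periodic mesh; this is the structural gain that lets one absorb the gradient of $\xi$. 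The nonlinear convection term is linearized by writing $f(u)-f(u_h)$ via the mean-value theorem and controlling it with the smoothness bounds on $f^{(m)}$, $m=1,2,3$, while the monotone-flux viscosity form with $\alpha \ge \alpha_0$ supplies the dissipation needed to handle the jump $[\xi]$ on edges. The reaction term $r(u)-r(u_h)$ is handled similarly by Lipschitz continuity of $r$.

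The main obstacle, as always in these nonlinear LDG analyses, will be closing the estimate on the nonlinear convection contribution without losing a power of $h$. The difficulty is that the naive bound on the term involving $f'(u)-f'(u_h)$ times $\xi$ produces a factor like $\|\xi\|_{0,\infty}$, which is not a priori small; one must invoke an a priori assumption $\|\xi\|_0 \le h^{k}$ (or a bootstrap/induction in time argument), use the inverse inequality \eqref{inverse properties} to convert $\|\xi\|_{0,\infty} \le Ch^{-d/2}\|\xi\|_0$, and verify that for $h$ small enough the resulting constant stays controlled so the a priori assumption can be recovered. This is precisely why the theorem requires ``small enough $h$'' and why the constant $C$ depends on the upper bounds of $|f^{(m)}|$ for $m=1,2,3$ (the third derivative appears when estimating the quadratic remainder of the Taylor expansion of the flux). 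After collecting all terms, I would arrive at a differential inequality $\frac{d}{dt}\|\xi\|_0^2 \le C(\|\xi\|_0^2 + h^{2k})$, apply Gr\"onwall's inequality to get $\|\xi\|_0 \le Ch^{k}$ on $[0,T]$ with $C$ depending on $T$, and finally combine with the projection estimate $\|\eta\|_0 \le Ch^{k+1}$ via the triangle inequality to conclude \eqref{l2 error estimate for CDR equation}. The suboptimal rate $h^{k}$ (rather than $h^{k+1}$) reflects the generic loss incurred by the non-$L^2$-projection reconstruction operator and the nonlinear convection term under the alternating flux.
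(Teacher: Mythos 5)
Your proposal follows essentially the same route as the paper's own proof: the same splitting $\xi = R^ku-u_h$, $\pmb\eta = R^k\pmb q - \pmb q_h$ using the reconstruction operator in place of an $L^2$ projection, the same energy argument with $v_h=\xi$, $\pmb p_h = \pmb\eta$, an a priori assumption plus inverse estimates to control the nonlinear convection terms (the paper Taylor-expands $f$ to third order, which is where $|f^{(3)}|$ enters), telescoping of the alternating-flux diffusion terms, monotone-flux dissipation for the jumps, and Gr\"onwall plus the triangle inequality to conclude. The only notable difference is that the paper's a priori assumption is the weaker bound $\Vert u-u_h\Vert_0\le Ch^{(d+1)/2}$ rather than your $\Vert\xi\Vert_0\le h^k$, which makes the bootstrap close cleanly since the proven rate $h^k$ strictly improves on it for $k\ge 2$, $d\le 2$.
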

\begin{remark}
The proof of Theorem \ref{l2 estimate for the LDG} follows \cite{xu2007error} where the author presented the results for the one- and two-dimensional cases and the $(k+\frac{1}{2})$th order $L^2$ norm estimate was given. Here, we can only obtain the $k$th order $L^2$ norm estimate in theory, although the $(k+1)$th order convergence results can be observed numerically in Section \ref{Convergence order study}. The main problem is that there are not enough DoFs for us to define some kind of $L^2$-projection with multiple orthogonal properties which always plays an important role in eliminating the order-reduction terms during error estimates, such as the jump terms and the partial terms \cite{ciarlet2002finite}.
\end{remark}

\subsection{Fully discrete IMEX RK LDG scheme}
In this section, we would like to introduce the third-order IMEX RK method \cite{calvo2001linearly} for time discretization, which treats the nonlinear terms, such as convection, reaction, and source terms explicitly but the linear diffusion term implicitly. It is a kind of balanced scheme which does not subject to severe time step restriction while avoiding large nonlinear system solver. The simplicity and good performance of this method make it famous among many others, although the choice of implicit parts is too rough to handle the case of very stiff reaction terms. For the IMEX RK method coupled with the semi-discrete LDG scheme for solving convection-diffusion problems, Wang et al. have proposed a much weaker stability condition $\Delta t\le C$ with a constant $C$, where $\Delta t$ is the time step \cite{wang2015stability}. However, here we only consider a standard CFL stability condition $\Delta t\le Ch$ for our nonlinear CDR equation.

To describe the IMEX RK method distinctly, we consider the system \eqref{global weak formulation(gWF)} as the following simple ordinary differential equation (ODE) form
\begin{equation*}
 \begin{aligned}
\frac{d U(t)}{d t} &= L^1(t,\pmb Q)+N(t,U),\\
\pmb Q &= L^2(t,U),\\
U(0) &= U^0,
\end{aligned}
\end{equation*}
where $U$ is the undetermined coefficient vector, $L^1(t,\pmb Q)$ and $L^2(t,U)$ represent the linear diffusion part, $N(t,U)$ represents the nonlinear part, such as convection, reaction, and force part, in semi-discrete LDG scheme. Let $U^n$ be the numerical approximation at time $t^n$. One step time evolution of the three-stage third-order IMEX RK scheme is given by
\begin{equation}\label{three-order IMEX RK scheme}
 \begin{aligned}
U^{(1)} &= U^n,\\
U^{(i)} &= U^n+\Delta t\sum_{j = 2}^ia_{ij}L^1(t_j^n,\pmb Q^{(j)})+\Delta t\sum_{j = 1}^{i-1}\hat{a}_{ij}N(t_j^n,U^{(j)}),\ 2\le i\le 4,\\
U^{n+1} &= U^n+\Delta t\sum_{i = 2}^{4}b_{i}L^1(t_i^n,\pmb Q^{(i)})+\Delta t\sum_{i = 1}^{4}\hat{b}_{i}N(t_i^n,U^{(i)}),\\
\pmb Q^{(i)} &= L^2(t_i^n,U^{(i)}),\ 2\le i\le 4.\\
\end{aligned}
\end{equation}
Where $U^{(i)}$ denotes the intermediate stages at the corresponding time $t_i^n = t^n+c_i\Delta t$. All the coefficients in the scheme are presented in the following Butcher tableau
\begin{table}[H]
\centering
\renewcommand{\arraystretch}{1.5}
\begin{tabular}{c|c|c}
$\pmb c$& $A$&$\hat{A}$\\
\hline
& $\pmb b^T$&$\hat{\pmb b}^T$\\
\end{tabular}
\end{table}
\noindent which is specified as follows,
\begin{table}[H]
\centering
\renewcommand{\arraystretch}{1.5}
\begin{tabular}{c|c c c c|c c c c}
0&0&0&0&0&0&0&0&0\\
$\gamma$&0&$\gamma$&0&0&$\gamma$&0&0&0\\
$\frac{1+\gamma}{2}$&0&$\frac{1-\gamma}{2}$&$\gamma$&0&$\frac{1+\gamma}{2}-\alpha_1$&$\alpha$&0&0\\
1&0&$\beta_1$&$\beta_2$&$\gamma$&0&1-$\alpha_2$&$\alpha_2$&0\\
\hline
& 0&$\beta_1$&$\beta_2$&$\gamma$&0&$\beta_1$&$\beta_2$&$\gamma$\\
\end{tabular}
\end{table}
\noindent where the parameters are selected as $\gamma\approx  0.435866521508459$, $\beta_1 = -\frac{3}{2}\gamma^2+4\gamma-\frac{1}{4}$, $\beta_2 = \frac{3}{2}\gamma^2-5\gamma-\frac{5}{4}$, $\alpha_1 = -0.35$, and $\alpha_2 = \frac{\frac{1}{3}-2\gamma^2-2\beta_2\alpha_1\gamma}{\gamma(1-\gamma)}$ \cite{calvo2001linearly}.

So far, we have developed the fully discrete IMEX RK LDG scheme by combining the semi-discrete LDG scheme \eqref{global weak formulation(gWF)} and the IMEX RK scheme \eqref{three-order IMEX RK scheme}. In this type of scheme, repetitively solving large linear systems is the main challenge for the computation cost. This allows the full benefit of RDG space to be realized in terms of cost savings.


\section{Numerical examples}\label{Sec:Numerical examples}
In this section, we would like to validate our scheme and study its numerical behavior. The numerical results focus mainly on the following aspects: First, we demonstrate the convergence behavior of the third-and sixth-order IMEX RK LDG scheme proposed based on the RDG space. Secondly, we investigate the ability to capture sharp layers for our method, which demonstrates the local property of our reconstruction operator.
\subsection{Convergence order study}\label{Convergence order study}

\paragraph{1D-Test 1}Consider the linear convection-diffusion equation
\[
\begin{aligned}
&u_t+u_x-u_{xx}=g(t,x),\ x\in[0,2\pi].
\end{aligned}
\]
\paragraph{1D-Test 2}Consider the nonlinear convection-diffusion equation
\[
\begin{aligned}
&u_t+(\frac{u^2}{2})_x-u_{xx} =g(t,x),\ x\in[-\pi,\pi].
\end{aligned}
\]
The same initial condition $u(x,0) = sin(x)$ and periodic boundary condition are used for the above two tests. Moreover, the same exact solution is given by $u(x,t) = \sin(x-t)$. Then the right-hand sides of these two tests can be obtained by a simple calculation.
\paragraph{1D-Test 3}We seek traveling wave solutions for the equation
\[
\begin{aligned}
&u_t-\nu^2 u_{xx}+u^3-u=0,\ x\in[-1,1],\\
&u(x,0) = \frac{1}{2}(1-\tanh(\frac{x}{2\sqrt{2}\nu})).\\
\end{aligned}
\]
The Dirichlet boundary condition is given by the exact solution $u(x,t) = \frac{1}{2}(1-\tanh(\frac{x-st}{2\sqrt{2}\nu}))$, where $s = 3\nu/\sqrt{2}$ is the speed of the traveling wave. Here, the value of the diffusion velocity is $\nu^2 = 0.01$.
\paragraph{2D-Test 1}Consider linear convection diffusion equation
\[
\begin{aligned}
&u_t+\nabla\cdot u-\Delta u=g(t,x,y),\ x,y\in[0,2\pi]\times[0,2\pi].
\end{aligned}
\]
\paragraph{2D-Test 2}Consider nonlinear convection diffusion equation
\[
\begin{aligned}
&u_t+\nabla\cdot (\frac{u^2}{2})-\Delta u =g(t,x,y),\ x,y\in[0,2\pi]\times[0,2\pi].
\end{aligned}
\]
For the 2D-Test1 and 2D-Test2, let $u(x,y,t) = \sin(x+y-2t)$ be the exact solution. Naturally, the initial value conditions and the right-hand sides of these two tests are given. Here we consider the periodic boundary condition.
\paragraph{2D-Test 3}Consider the Allen-Cahn equation
\[
u_t-\Delta u + \frac{1}{\nu^2}(u^3-u)=g(t,x,y),\ x,y\in[0,2\pi]\times[0,2\pi],
\]
with the initial value $u(x,y,0) = sin(x+y)$ and the periodic boundary condition. Here, $\nu = 0.3$. The source term $g(x,y,t)$ is given by the exact solution $u(x,y,t) = \exp(-2t)\sin(x+y)$.

With the terminal time $t = 1$ and the fixed CFL condition $\Delta t = h$, we apply the third- and sixth-order IMEX RK LDG scheme with the RDG space (i.e. $k = 2, 5$) for solving 1D-Test1 to 3 on a series of meshes with the number of the elements $N = 2^4,2^5,2^6,\dots,2^9$. The $L^2$-norm errors in approximation to the exact solution $u$ and its first derivative $u_x$ are presented in Figure \ref{fig:error_1D}. We can observe that both the error $\Vert u-u_h\Vert_{L^2}$ and the error $\Vert u_x-q_h\Vert_{L^2}$ converge at the optimal rate $O(h^{k+1})$ for these tests. 
For two dimensions, let the CFL number be $0.5$ for 2D-Test 1 and 2D-Test 2, and $0.2$ for 2D-Test 3. Here, we consider the uniform rectangular meshes with the number of the elements $N = 20^2,30^2,40^2,\dots,70^2$ for the computation. We also compute the errors at $t=1$. Figure \ref{fig:error_2D} demonstrates the results which show the same conclusion as the one-dimensional case.
\begin{figure}[htbp]
\centering
\includegraphics[width=0.48\textwidth]{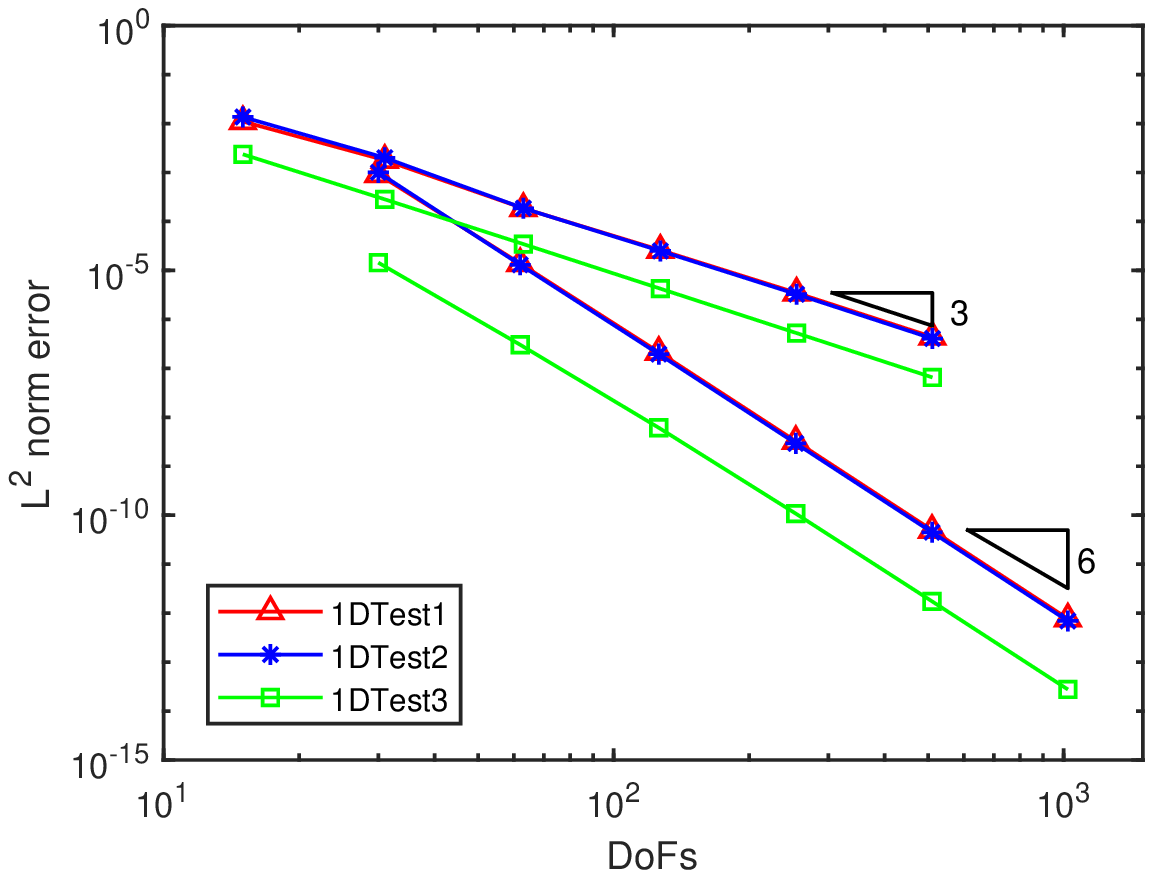}
\includegraphics[width=0.48\textwidth]{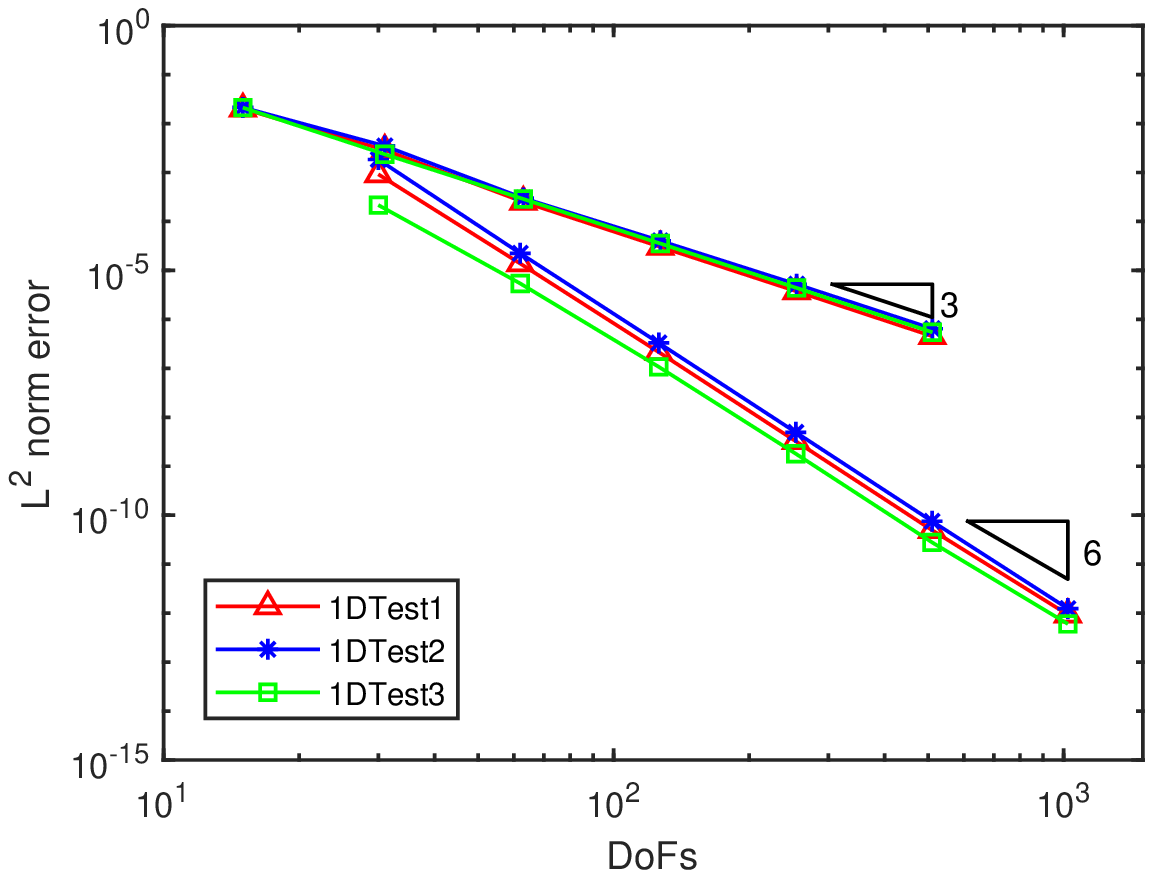}\\
\caption{The convergence rates of $\Vert u-u_h\Vert_{L^2(\mathcal{T}_h)}$ (left) and $\Vert u_x-q_h\Vert_{L^2(\mathcal{T}_h)}$ (right).}
\label{fig:error_1D}
\end{figure}
\begin{figure}[htbp]
\centering
\includegraphics[width=0.48\textwidth]{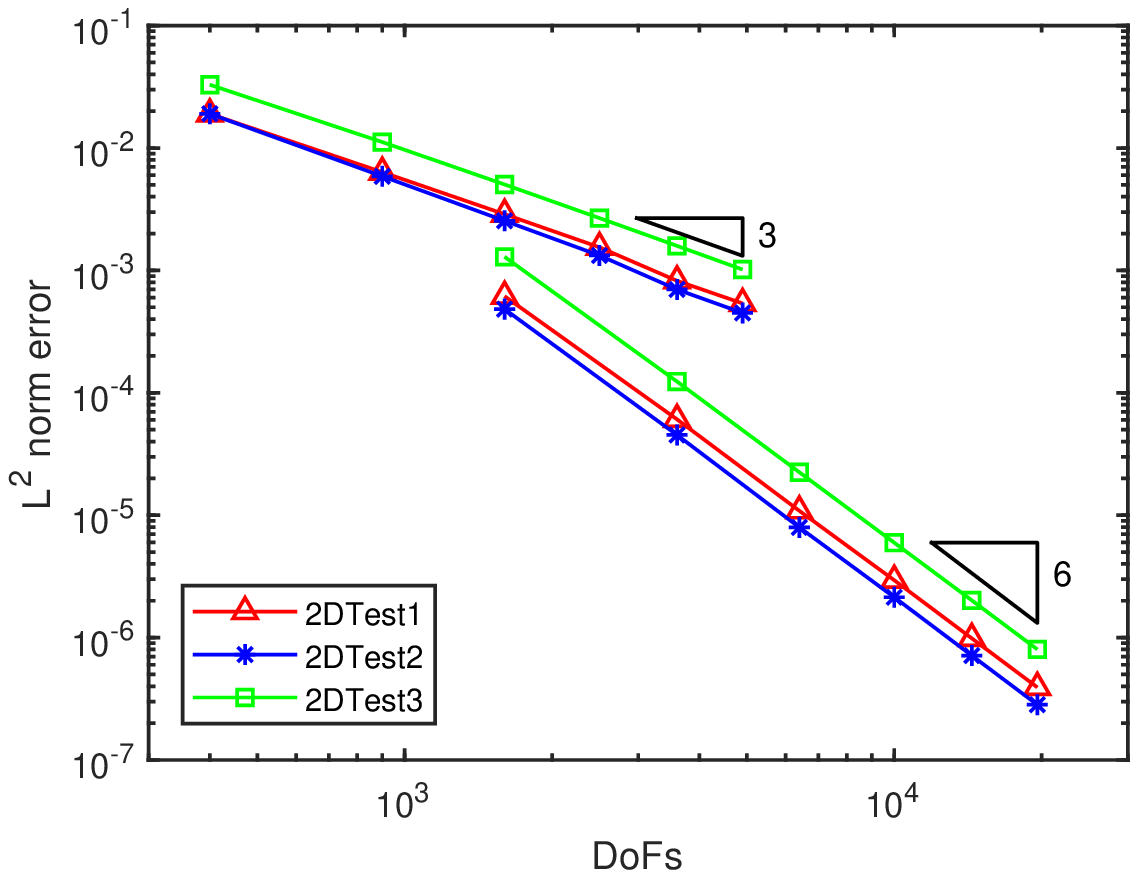}
\includegraphics[width=0.48\textwidth]{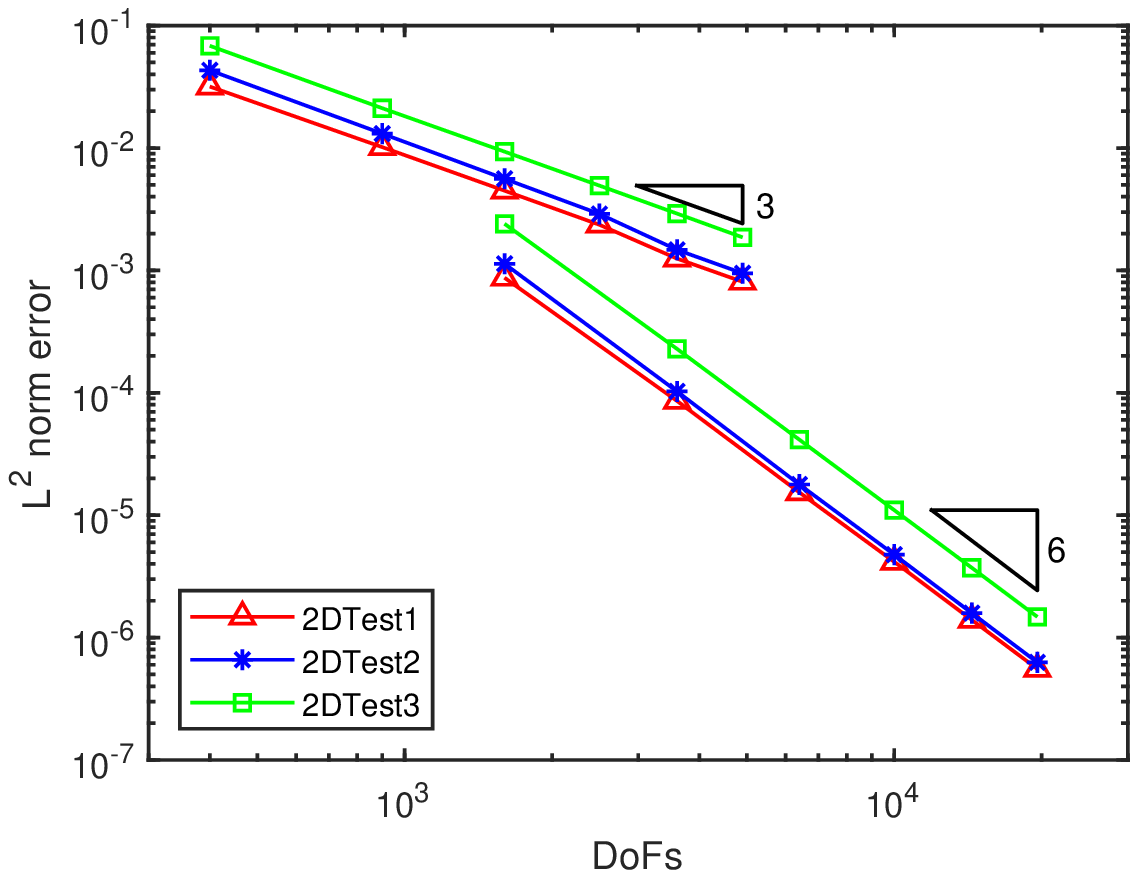}
\caption{The convergence rates of $\Vert u-u_h\Vert_{L^2(\mathcal{T}_h)}$ (left) and $\Vert \nabla u-\pmb q_h\Vert_{L^2(\mathcal{T}_h)}$ (right).}
\label{fig:error_2D}
\end{figure}

\subsection{Local property verification}
\paragraph{1D-Test 4}Consider the viscous Burgers equation with the Dirichlet boundary condition
\[
\begin{aligned}
&u_t+(\frac{u^2}{2})_x-\nu u_{xx}+\pi\cos(\pi x)u=0,\ x\in[0,1],\\
&u(0,t)= 1,\ u(1,t) = -0.1.
\end{aligned}
\]
The initial condition is given as follows,
\[
u(x,0) =
\left\{
\begin{aligned}
&1,\ 0\le x\le 0.3,\\
&-0.1,\ 0.3< x\le 1.
\end{aligned}
\right.
\]
\paragraph{2D-Test 4}Consider the viscous Burgers problem in \cite{nguyen2009implicit}
\[
\begin{aligned}
&u_t+\nabla\cdot (\frac{u^2}{2})-\nu\Delta u =g(x,y,t),\ x,y\in[0,1]\times[0,1],
\end{aligned}
\]
with the homogeneous Dirichlet boundary condition and the initial condition $u(x,y,0)$ $ = 0$. The source term $g(x,y,t)$ is given by the exact solution $u(x,y,t) = (\exp(t)-1)xy\tanh(\frac{1-x}{\nu})\tanh(\frac{1-y}{\nu})$.
\paragraph{2D-Test 5}Consider the rigid body rotation problem
\[
\begin{aligned}
&u_t+\nabla \cdot(\pmb\alpha(x,y)u)-\nu\Delta u = 0,\ x,y\in[-2\pi,2\pi]\times[-2\pi,2\pi],
\end{aligned}
\]
where $\pmb\alpha(x,y) = (-y,x)^T$. Here we consider the same boundary condition as 2D-Test 4. Following \cite[Example 3.4]{ding2020semi}, the initial condition includes a slotted disk, a cone, and a smooth hump.

We take the diffusion viscosity coefficients $\nu = 10^{-3}$ for 1D-Test4 and 2D-Test5, and $\nu = 10^{-2}$ for 2D-Test4.
For 2D-Test 4, we have a representation of the exact solution. For 1D-Test 4 and 2D-Test 5, without the exact solutions, let the numerical solutions solved on the fine meshes be regarded as the surrogate exact solutions.
In Figure \ref{fig:exact solution with layers}, we present these exact solutions at $t = 1$. We can find that there are layers near $x = 0.5$, along $x = 0$ and $y=1$, and on the boundary of a slotted disk for the solutions of 1D-Test 4, 2D-Test4, and 2D-Test 5 respectively. Note that the position of the layer for 2D-Test 5 moves in a counterclockwise direction with time, which is challenging for numerical simulation.
\begin{figure}[htbp]
\centering
\includegraphics[width=0.48\textwidth]{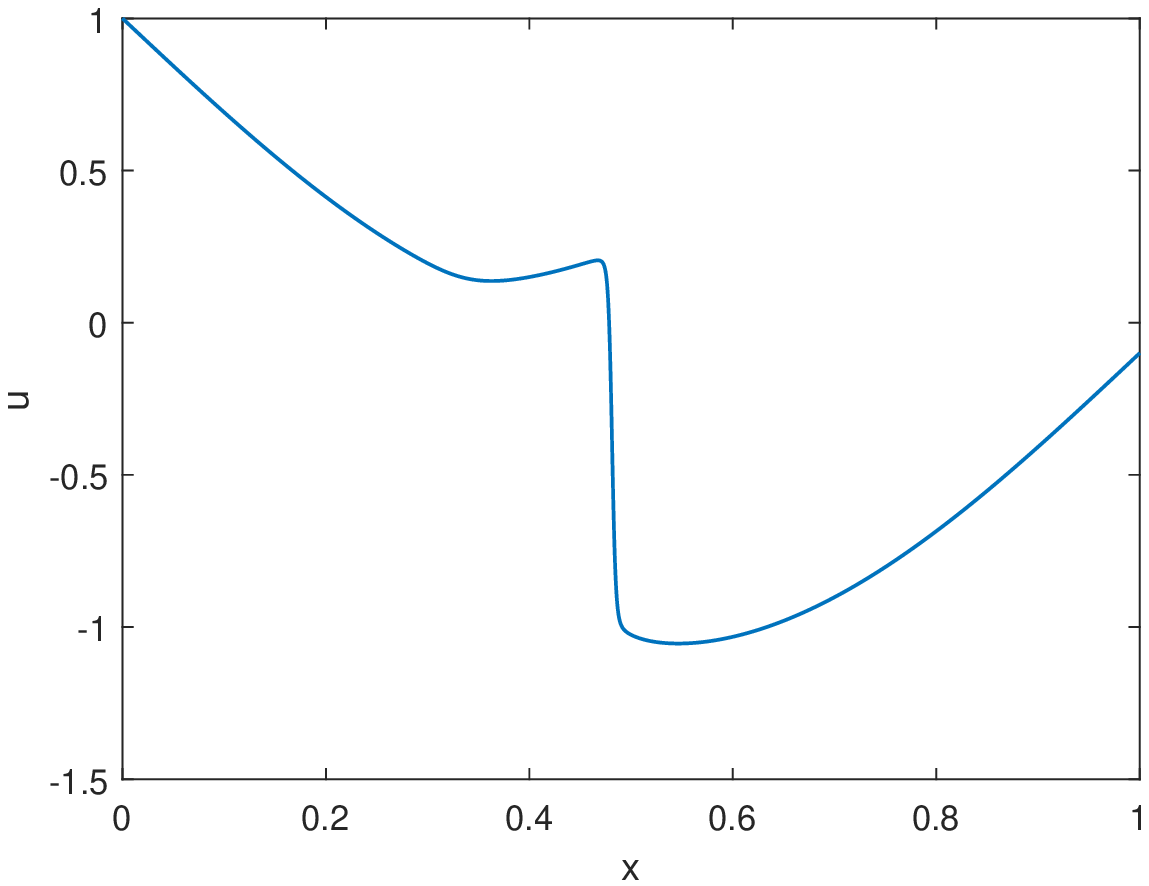}
\includegraphics[width=0.48\textwidth]{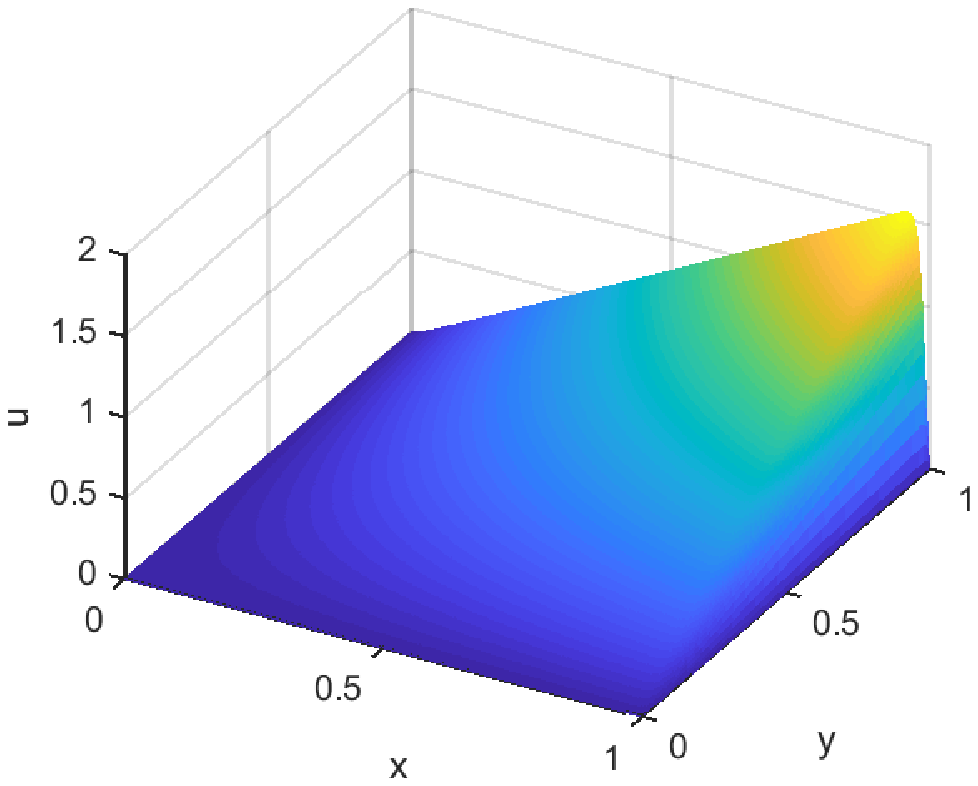}

\includegraphics[width=0.48\textwidth]{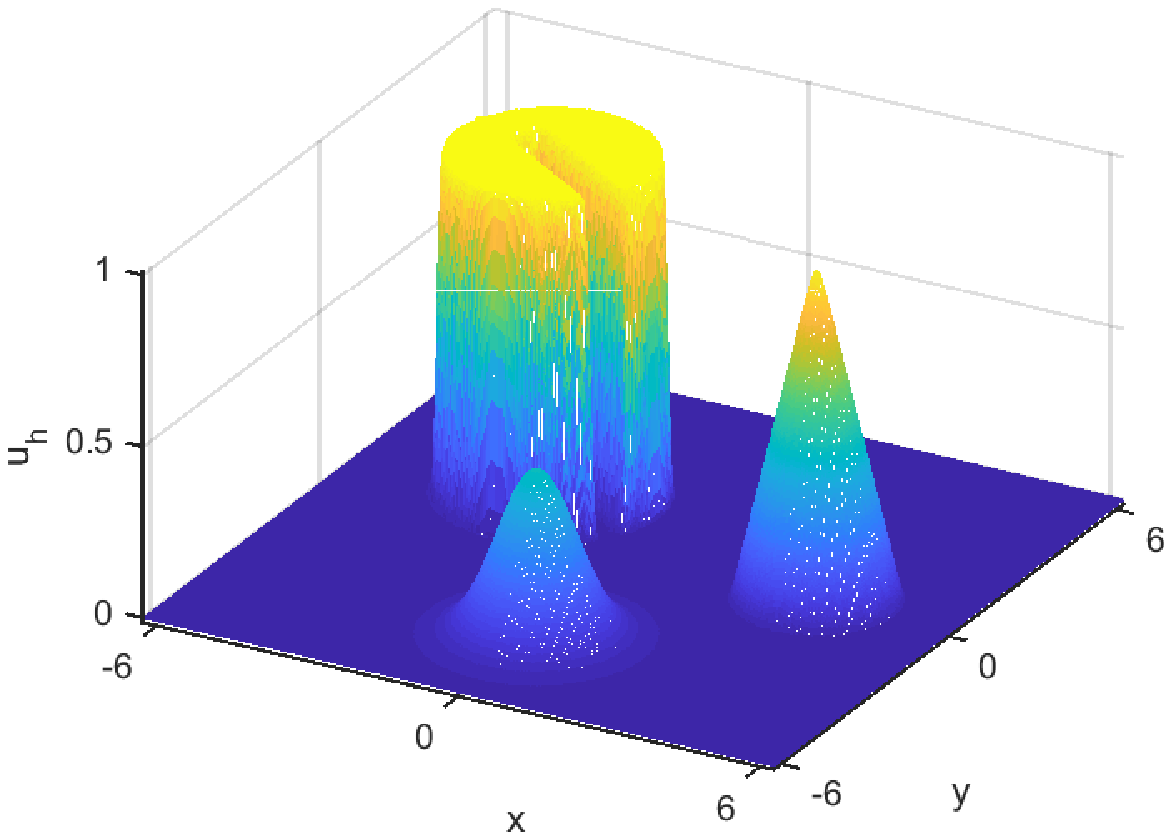}
\includegraphics[width=0.45\textwidth]{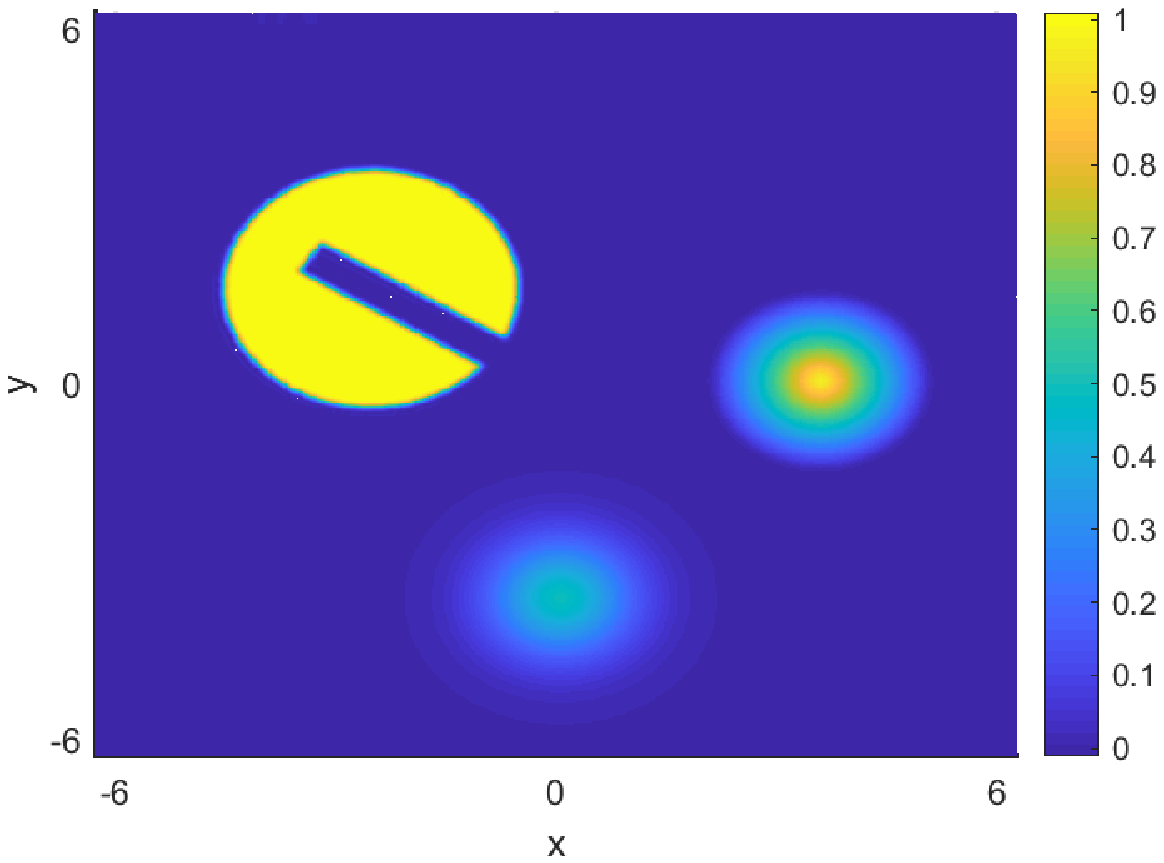}D
\caption{The image of the (surrogate) exact solutions for 1D-Test 4 (Left Top), 2D-Test 4 (Right Top), and 2D-Test 5 (Left Bottom). And the overlooking 2D image for 2D-Test 5 (Right Bottom). }
\label{fig:exact solution with layers}
\end{figure}

We apply the third- and sixth-order methods proposed in this article to solve the above three tests with different numbers of elements $N$. The plots of the numerical solutions
are presented in Figure \ref{fig:numerical solution 1DTest4}-\ref{fig:numerical solution 2DTest5}. We can observe that, when coarse mesh,  there is some overshooting/undershooting near the layer or the region where the layer goes through along with the time. As the mesh is refined, the overshooting/undershooting becomes smaller and smaller until the layers can be well captured.
Under the same mesh size, the higher-order method shows more excellent performance in capturing the layer. This is not only because of its intrinsically higher approximation accuracy but also as the fixed narrow stencil applied in our method guarantees the local data structure property for different order methods. 
\begin{figure}[htbp]
\centering
\includegraphics[width=0.3\textwidth]{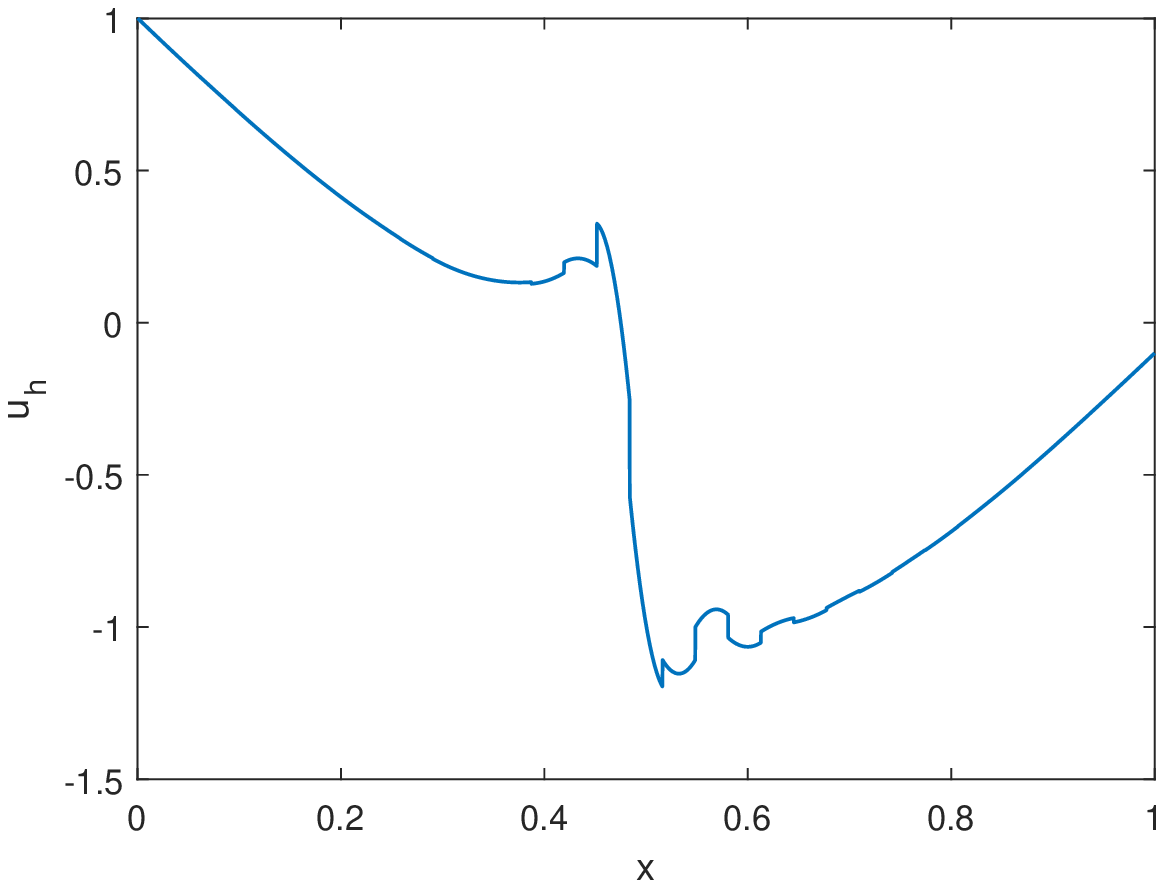}
\includegraphics[width=0.3\textwidth]{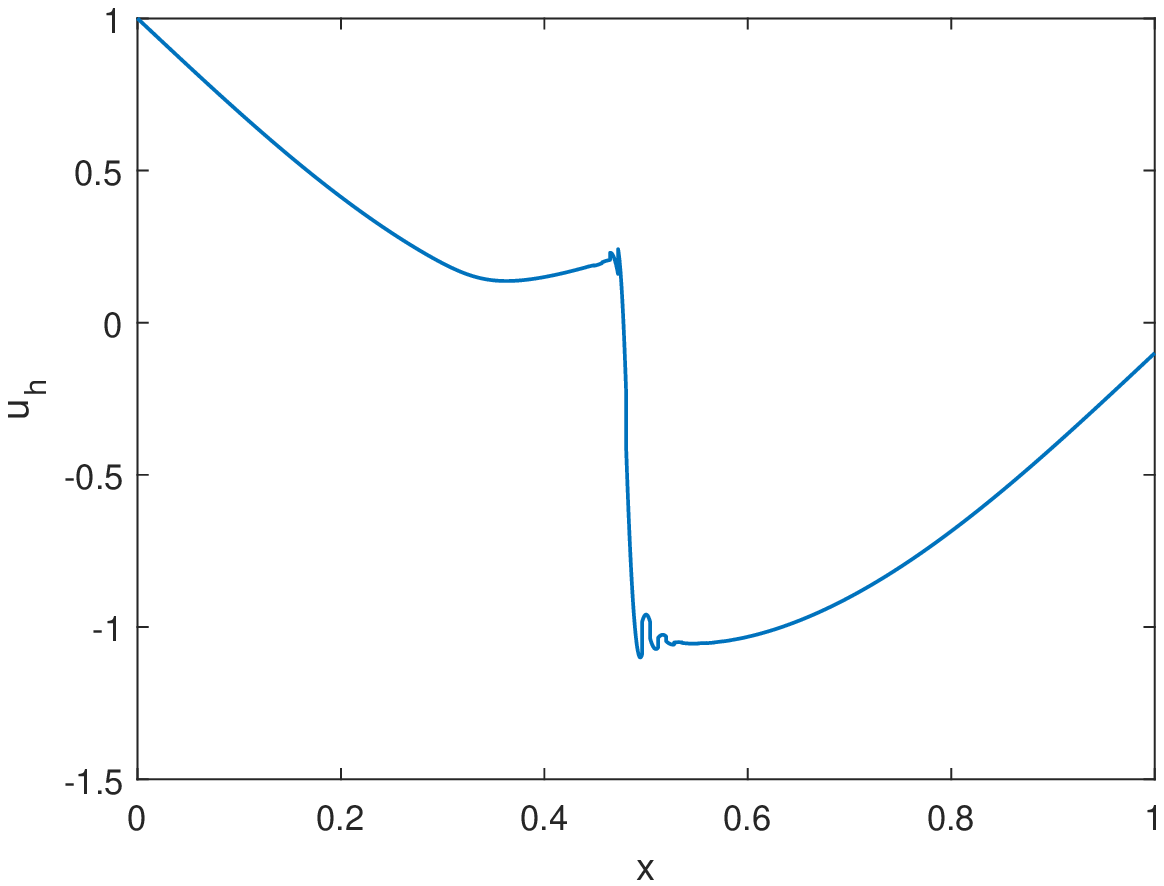}
\includegraphics[width=0.3\textwidth]{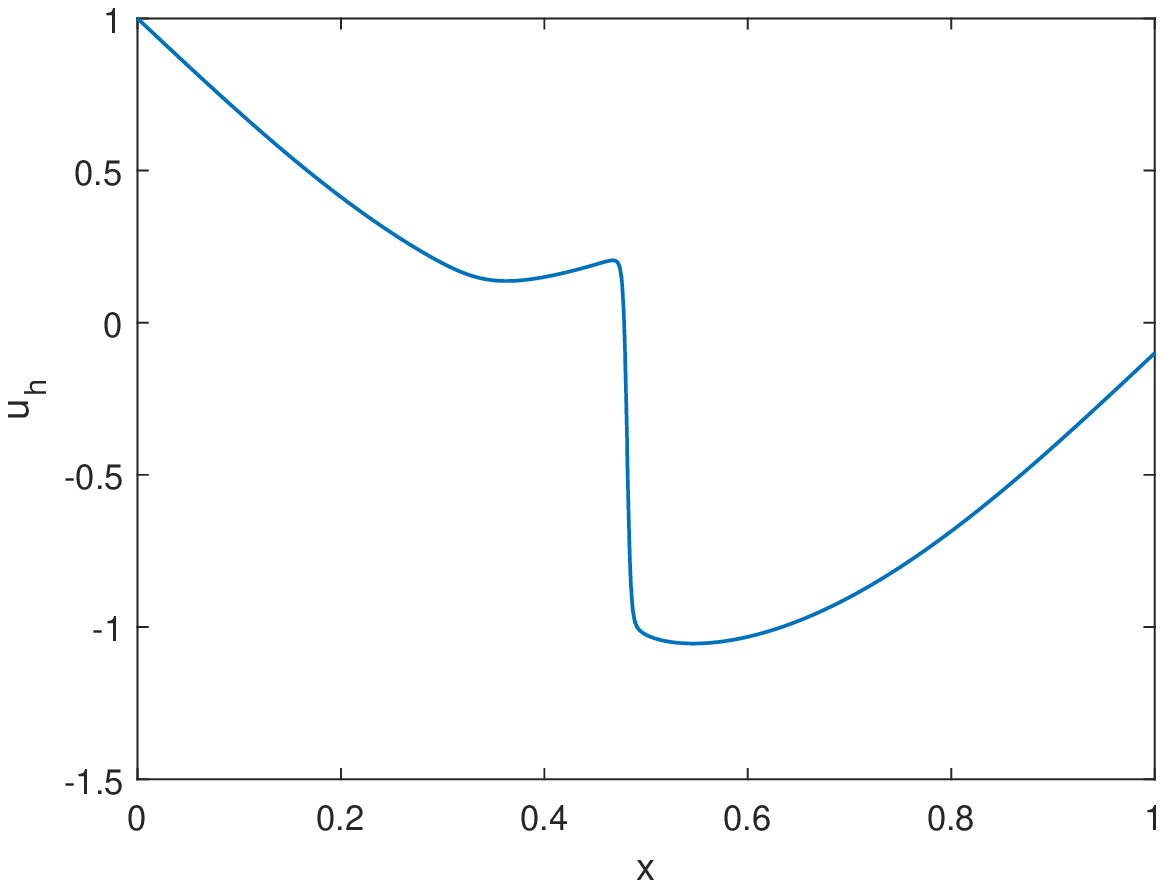}

\includegraphics[width=0.3\textwidth]{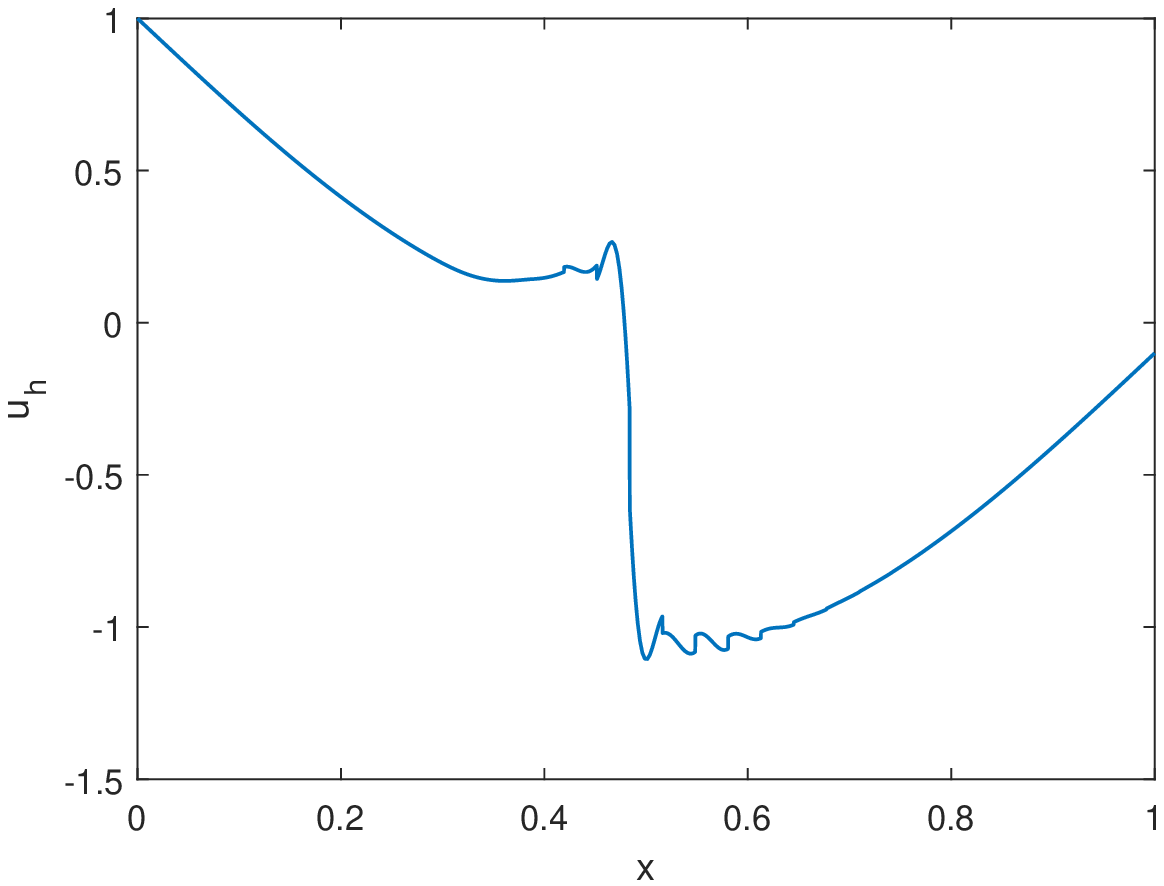}
\includegraphics[width=0.3\textwidth]{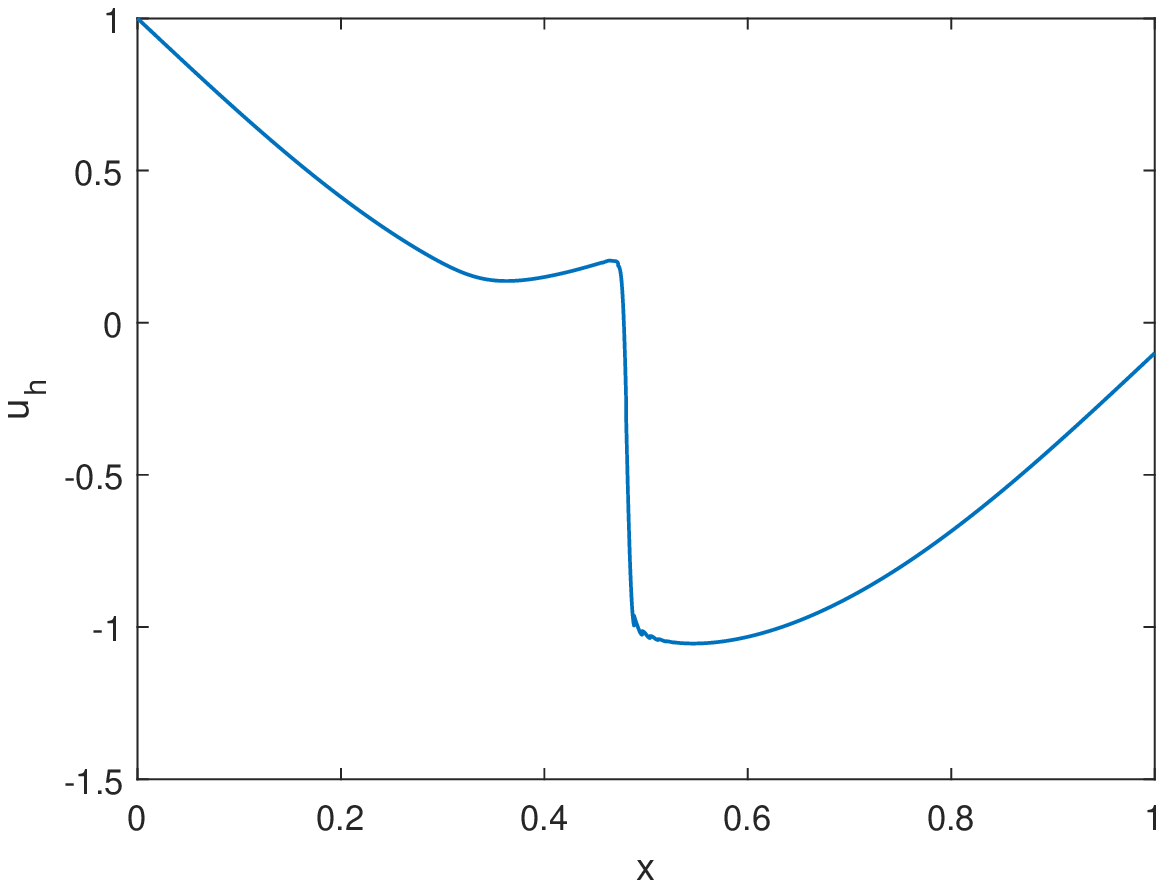}
\includegraphics[width=0.3\textwidth]{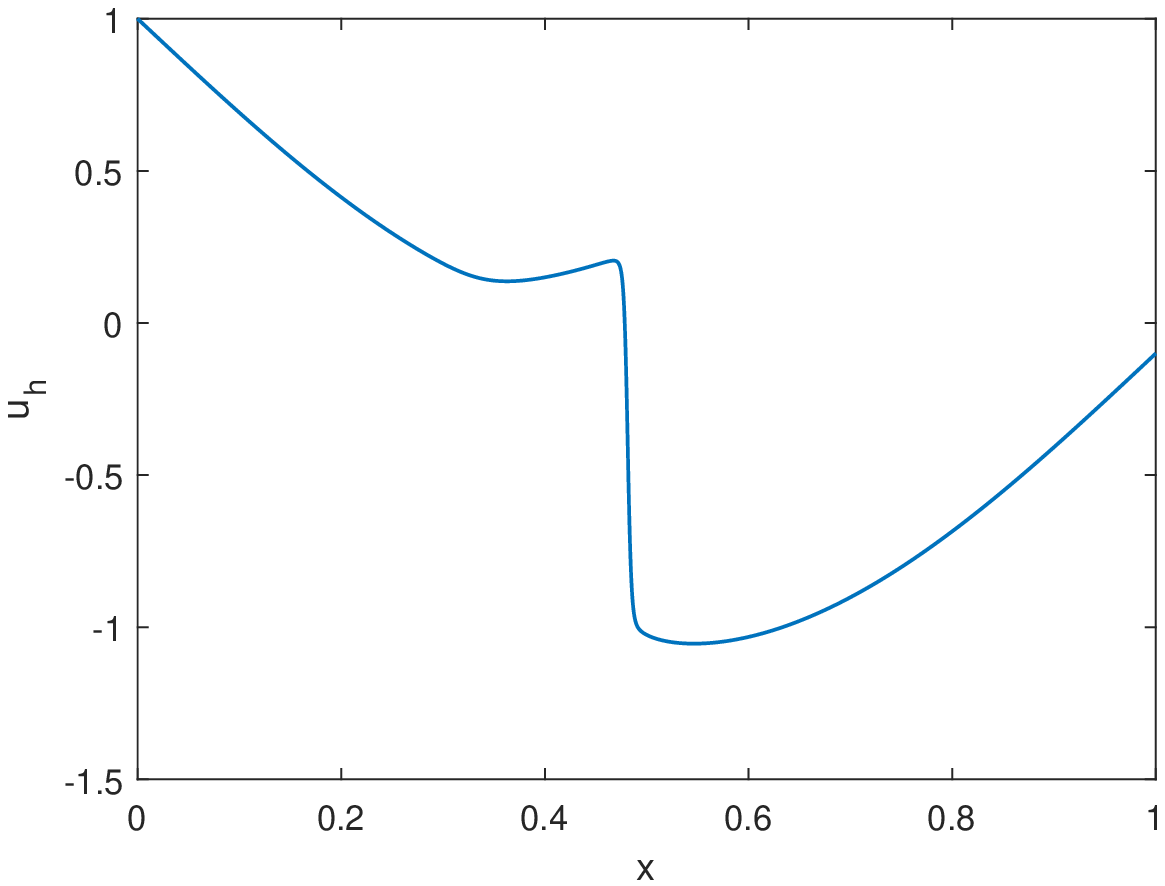}

\caption{The image of the numerical solutions for 1D-Test 4. Top row: numerical solutions for the third-order method with $N = 32, 128, 512$ (from Left to Right). Bottom row: numerical solutions for the sixth-order method.}
\label{fig:numerical solution 1DTest4}
\end{figure}

\begin{figure}[htbp]
\centering
\includegraphics[width=0.3\textwidth]{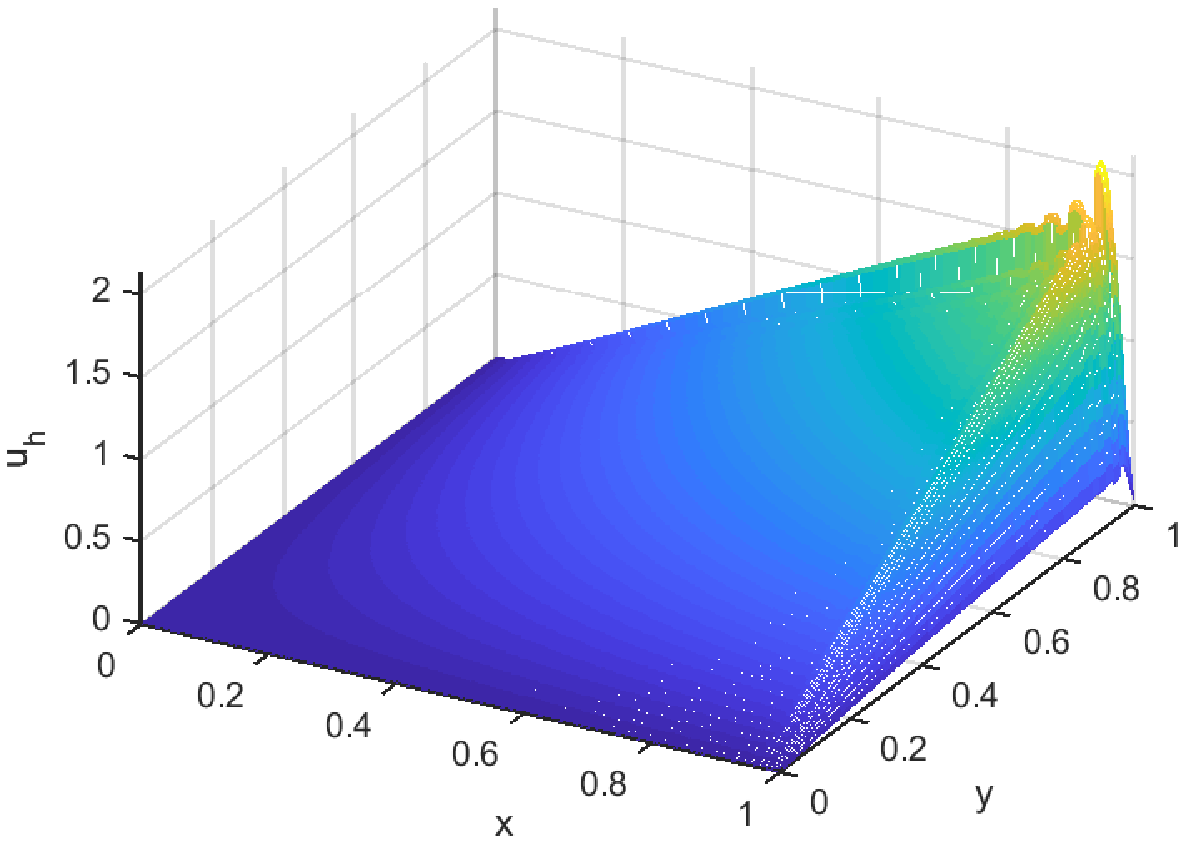}
\includegraphics[width=0.3\textwidth]{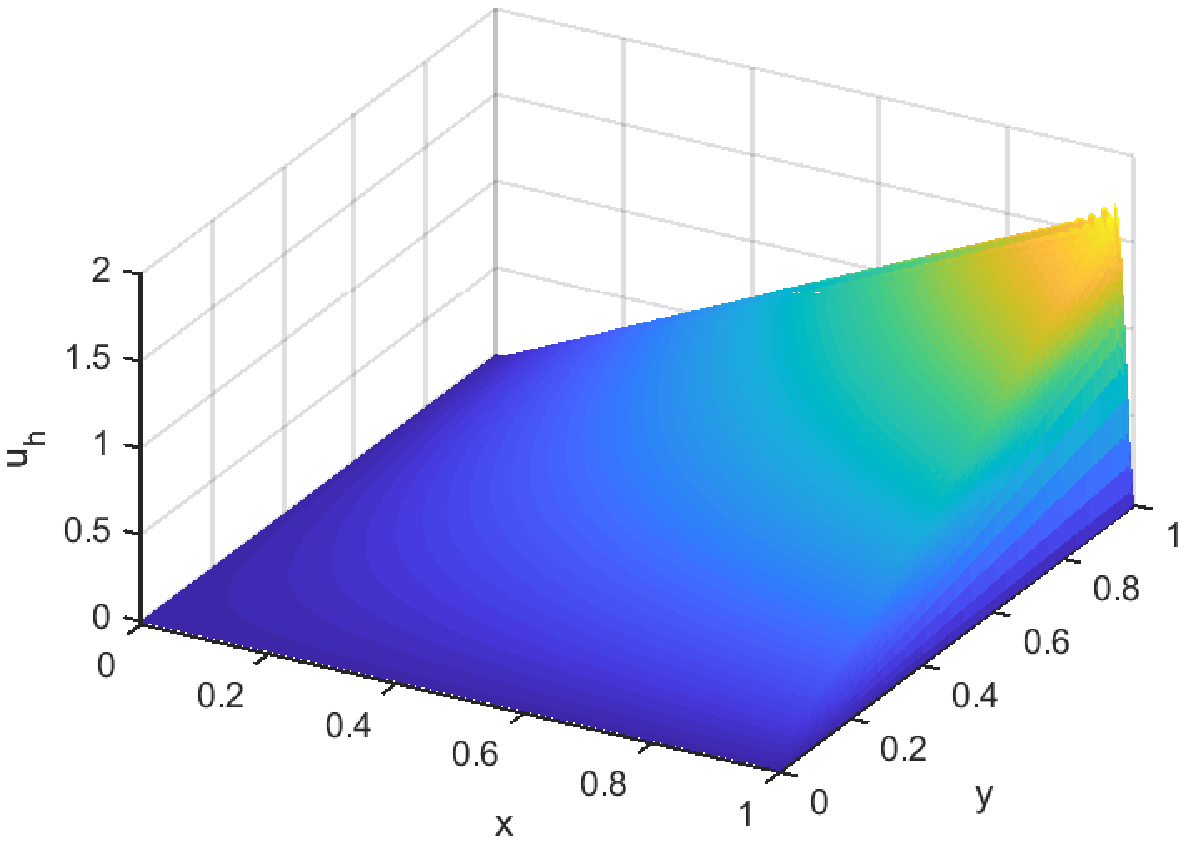}
\includegraphics[width=0.3\textwidth]{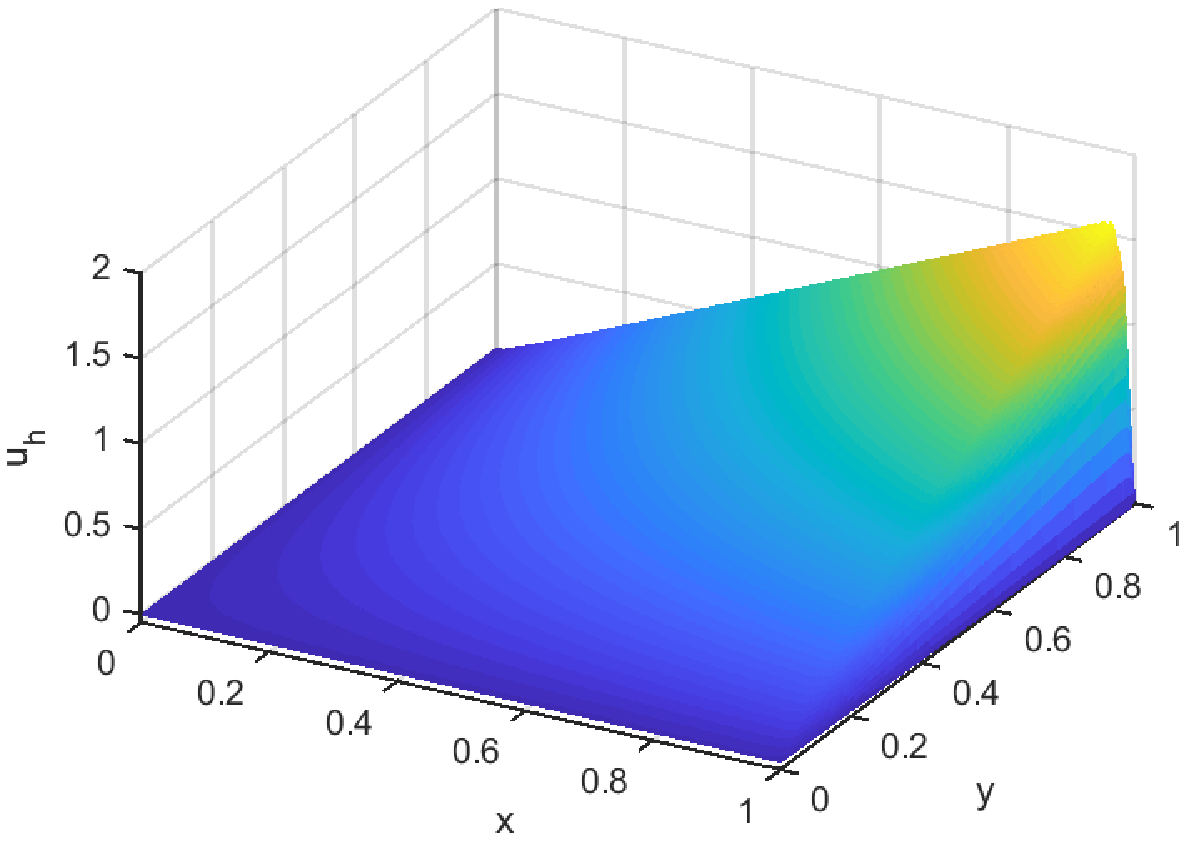}

\includegraphics[width=0.3\textwidth]{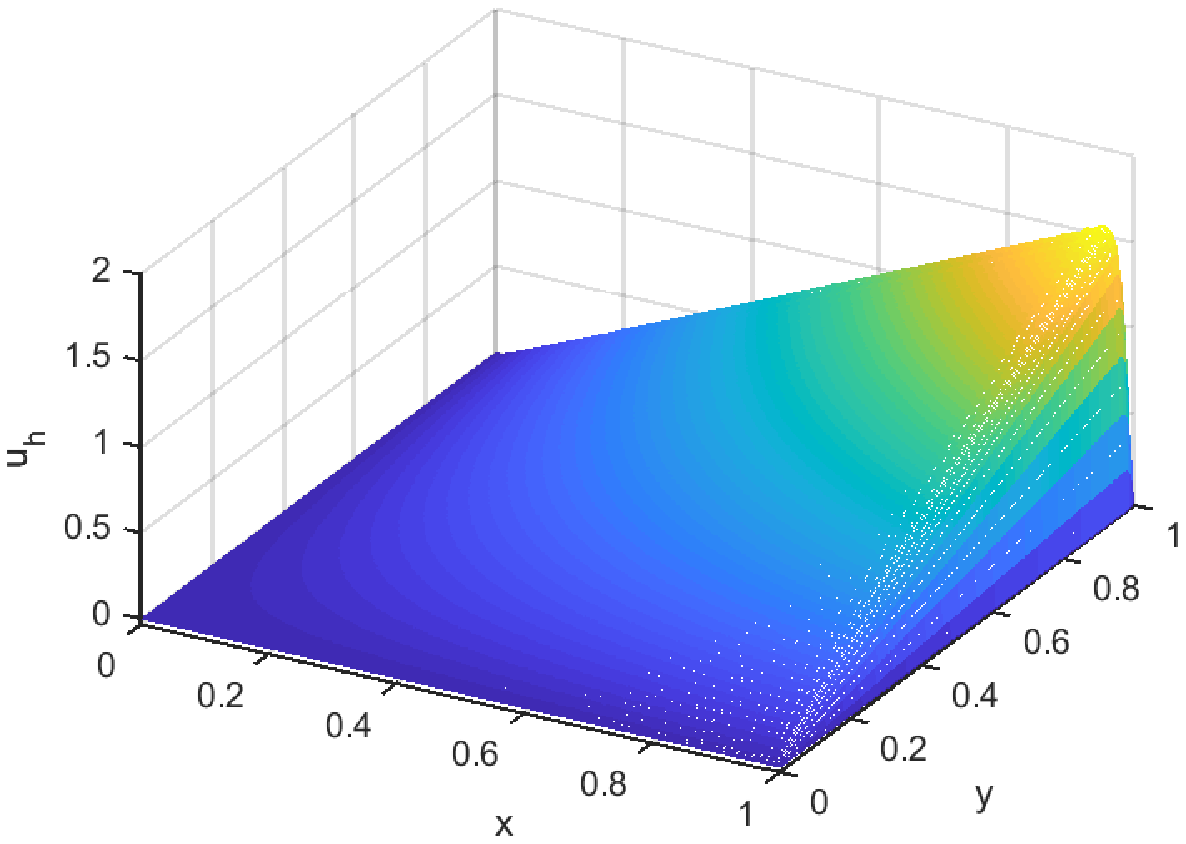}
\includegraphics[width=0.3\textwidth]{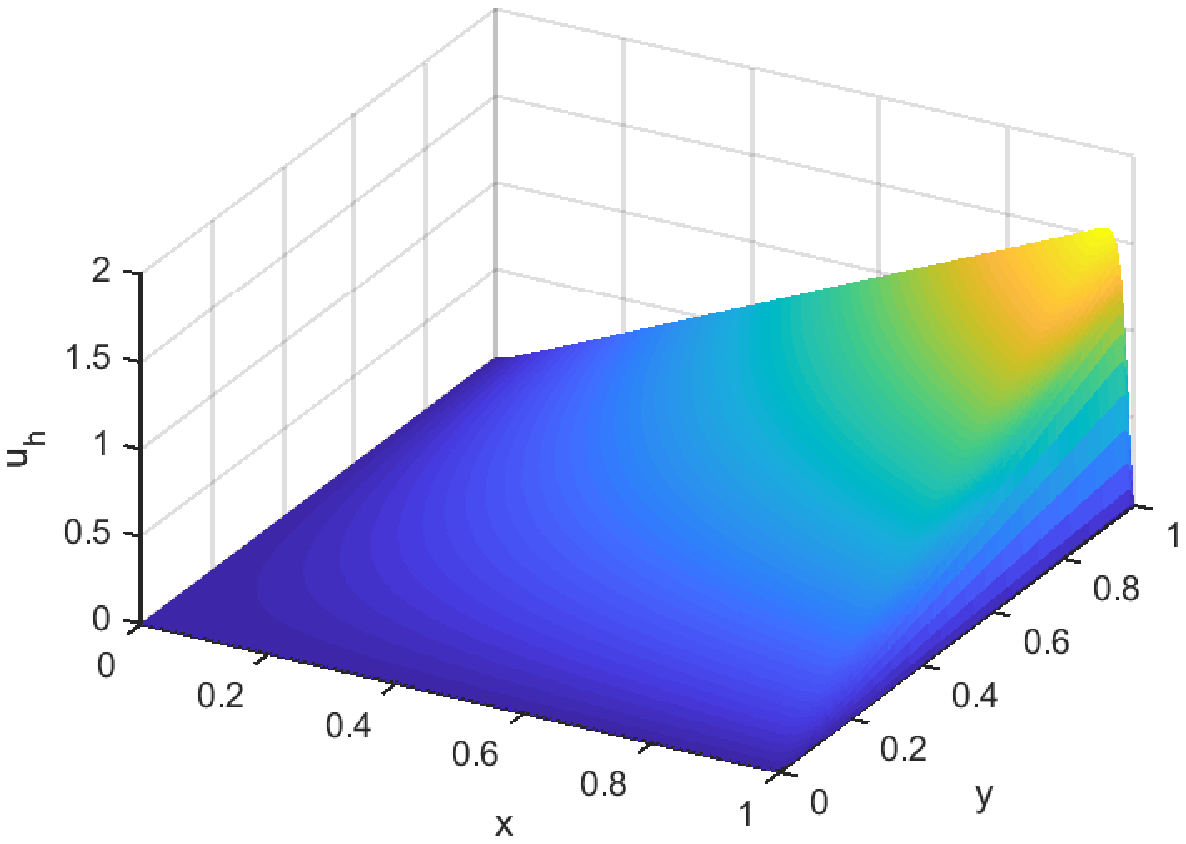}
\includegraphics[width=0.3\textwidth]{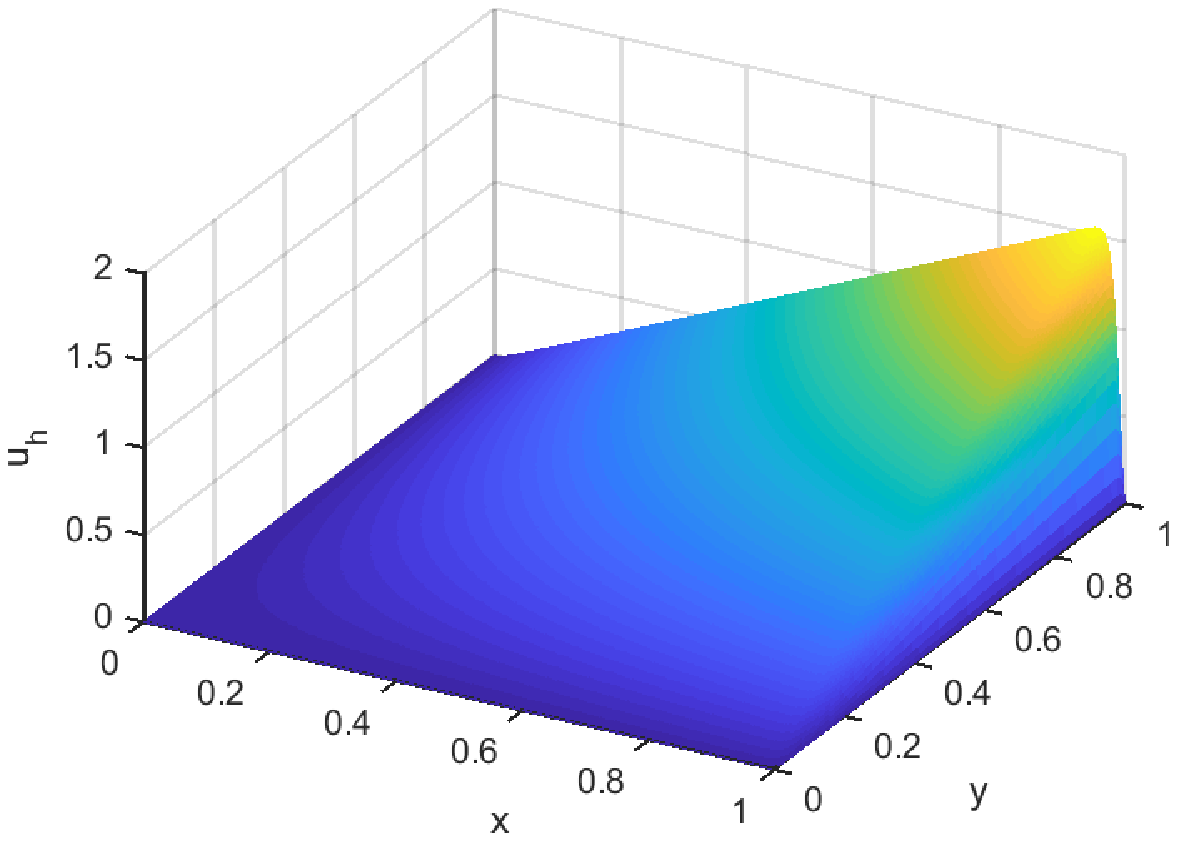}
\caption{The image of the numerical solutions for 2D-Test 4. Top row: numerical solutions for the third-order method with $N = 50^2, 100^2, 150^2$ (from Left to Right). Bottom row: numerical solutions for the sixth-order method.
}
\label{fig:numerical solution 2DTest4}
\end{figure}
\begin{figure}[htbp]
\centering
\includegraphics[width=0.3\textwidth]{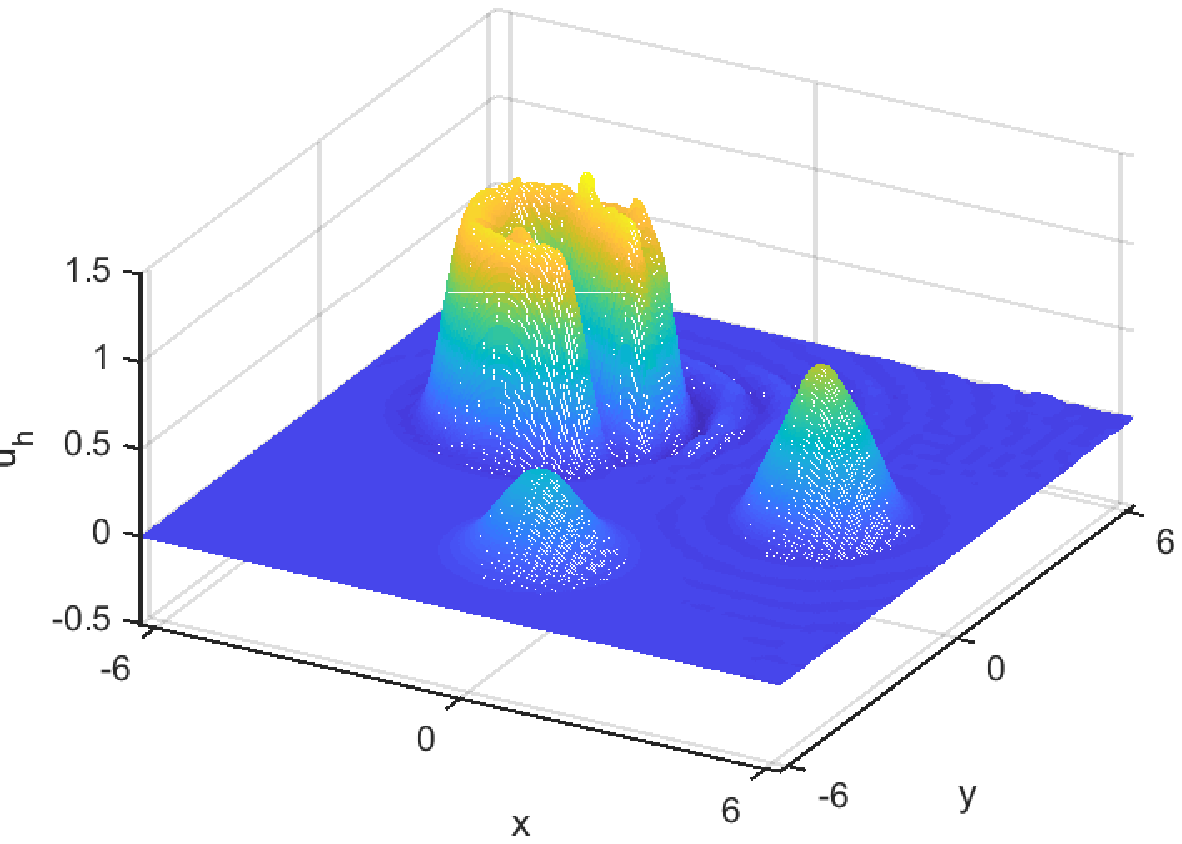}
\includegraphics[width=0.3\textwidth]{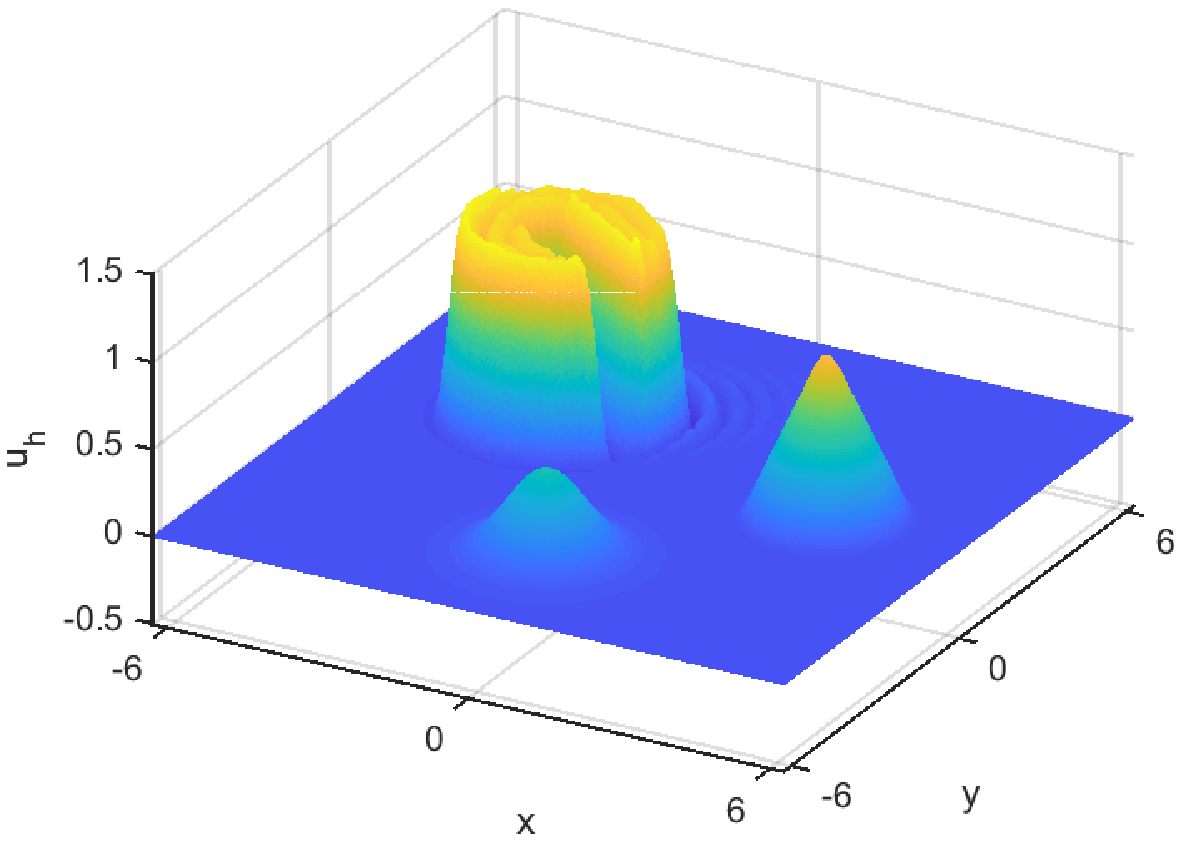}
\includegraphics[width=0.3\textwidth]{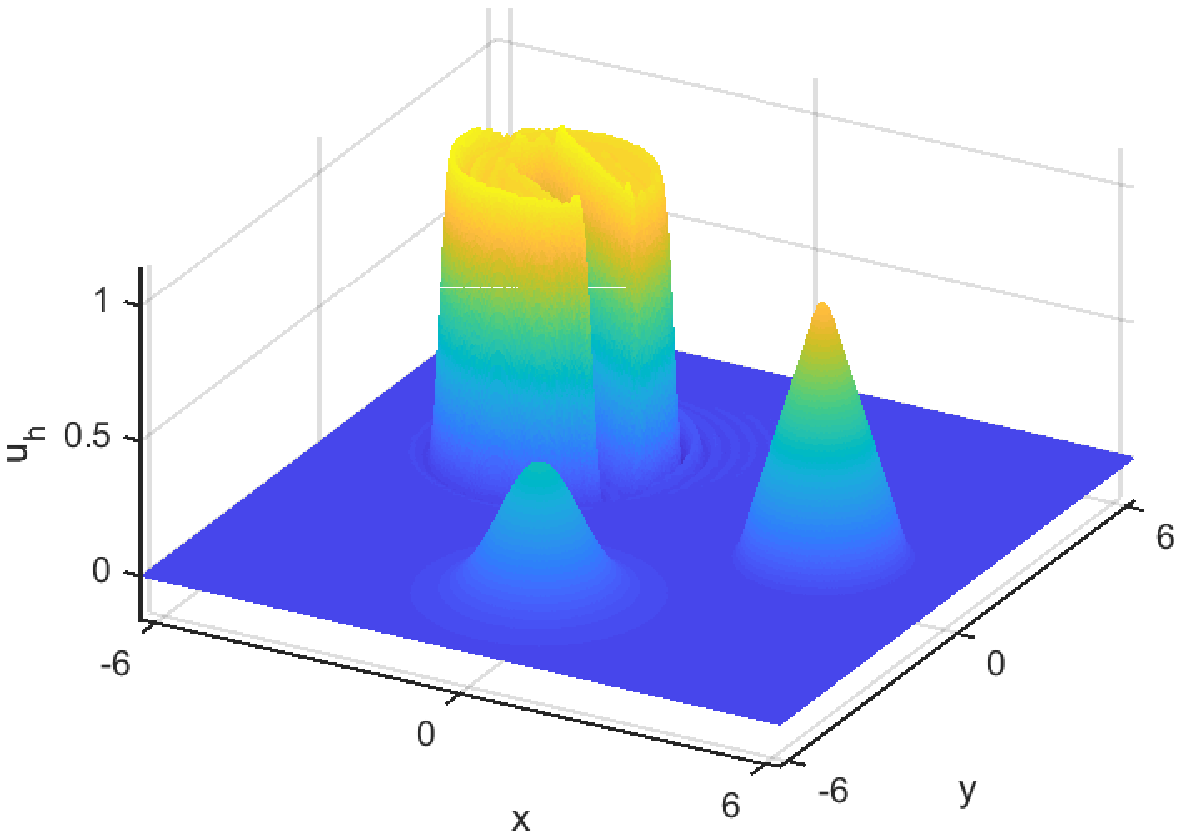}

\includegraphics[width=0.3\textwidth]{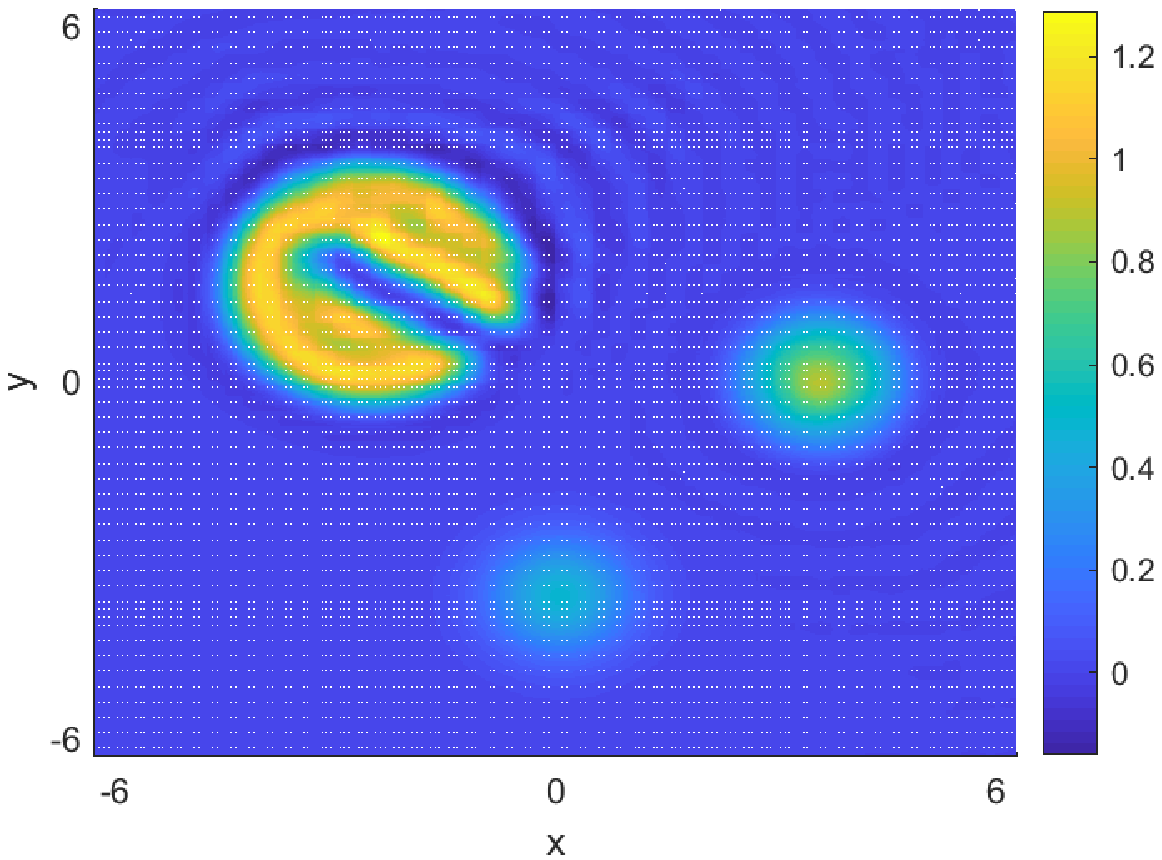}
\includegraphics[width=0.3\textwidth]{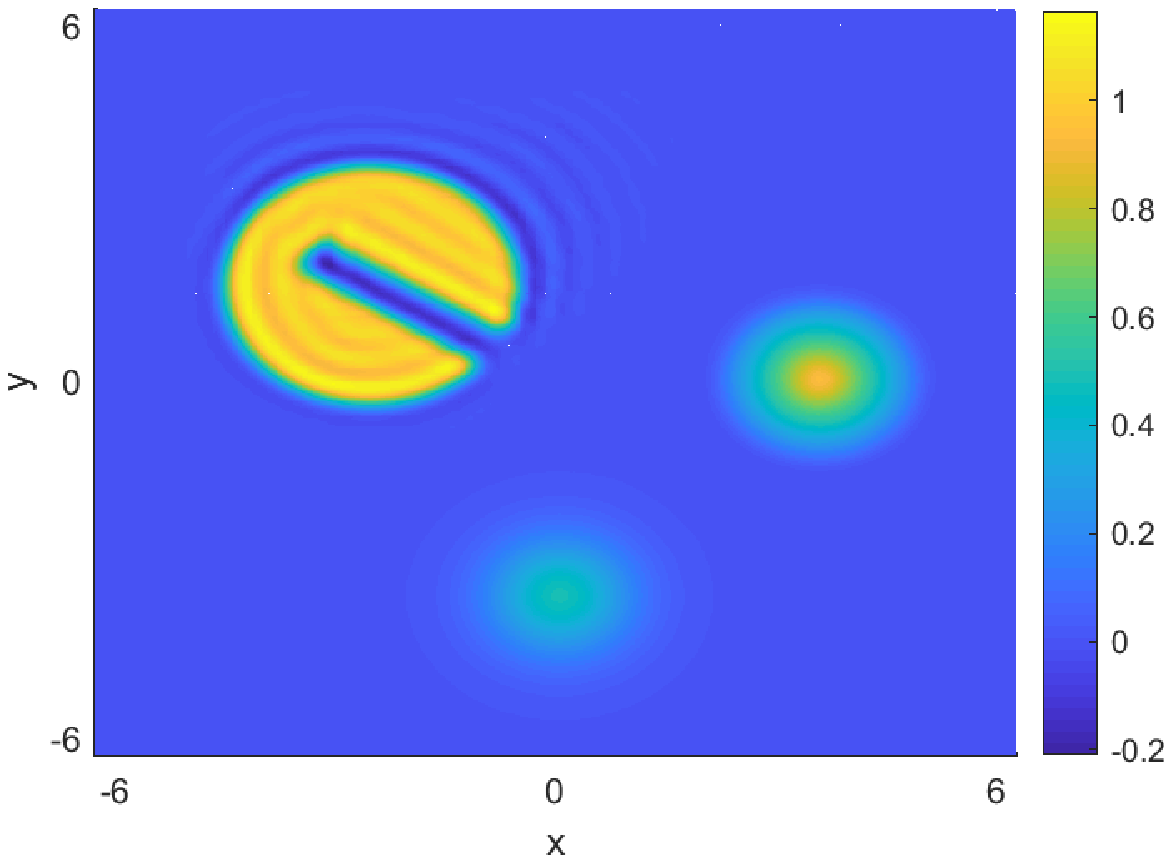}
\includegraphics[width=0.3\textwidth]{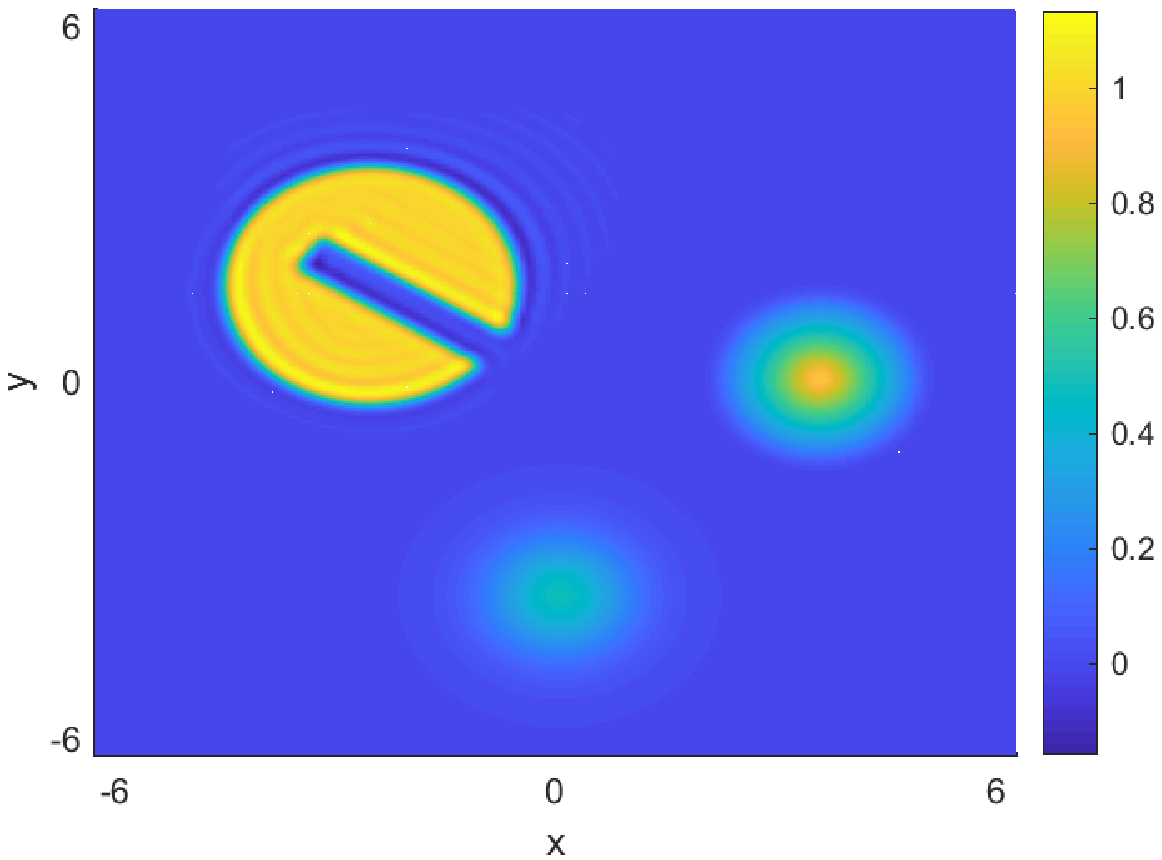}

\includegraphics[width=0.3\textwidth]{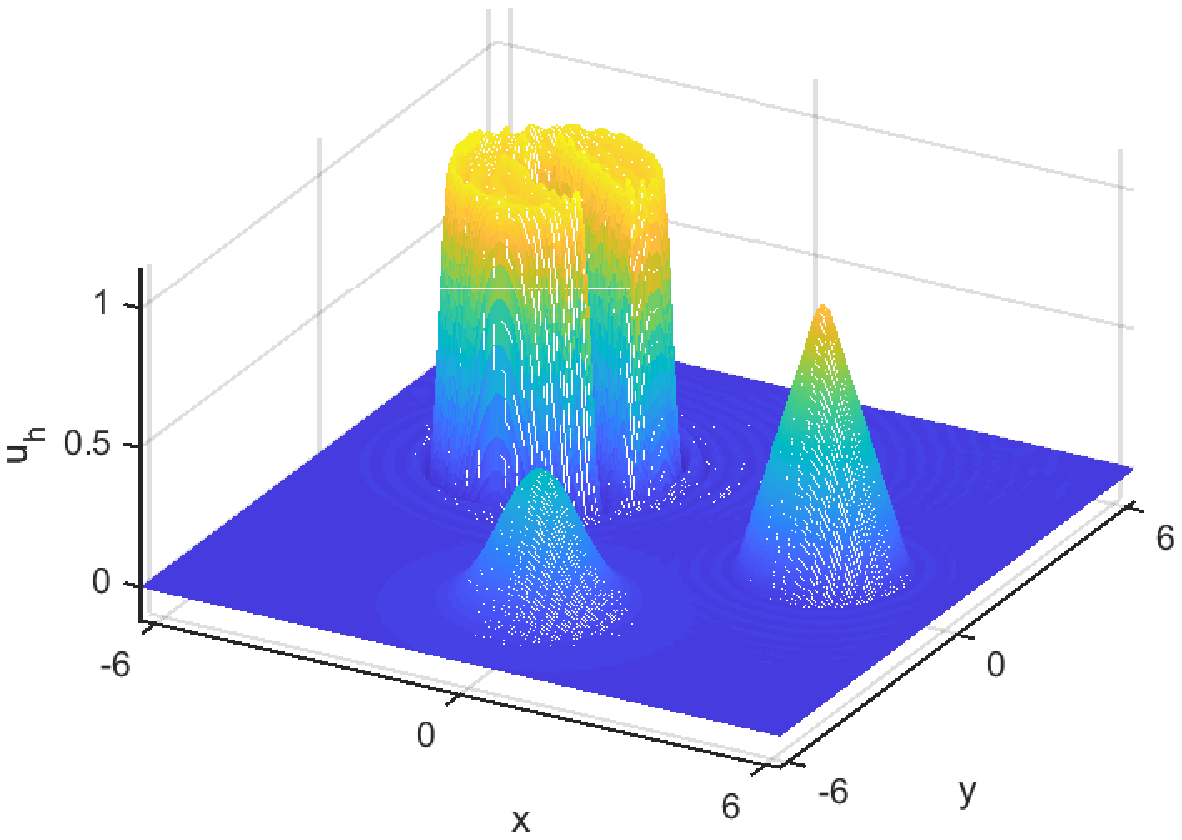}
\includegraphics[width=0.3\textwidth]{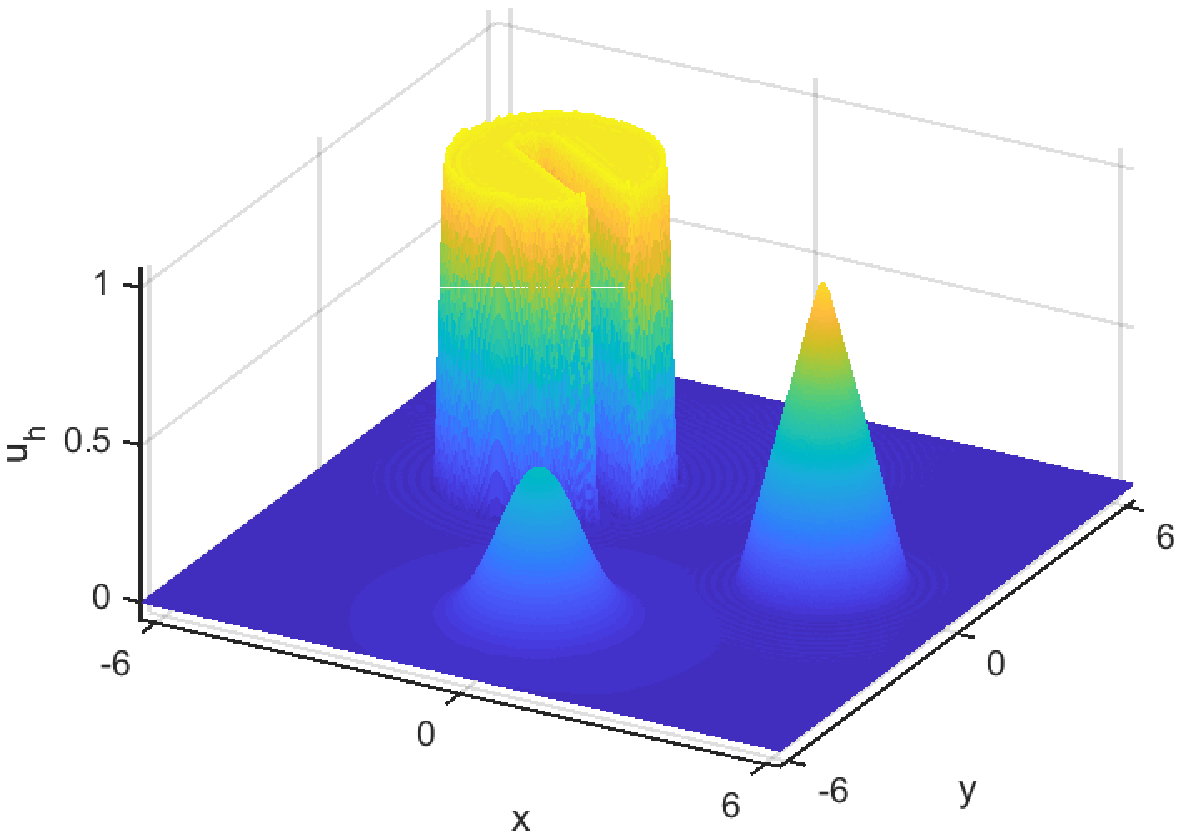}
\includegraphics[width=0.3\textwidth]{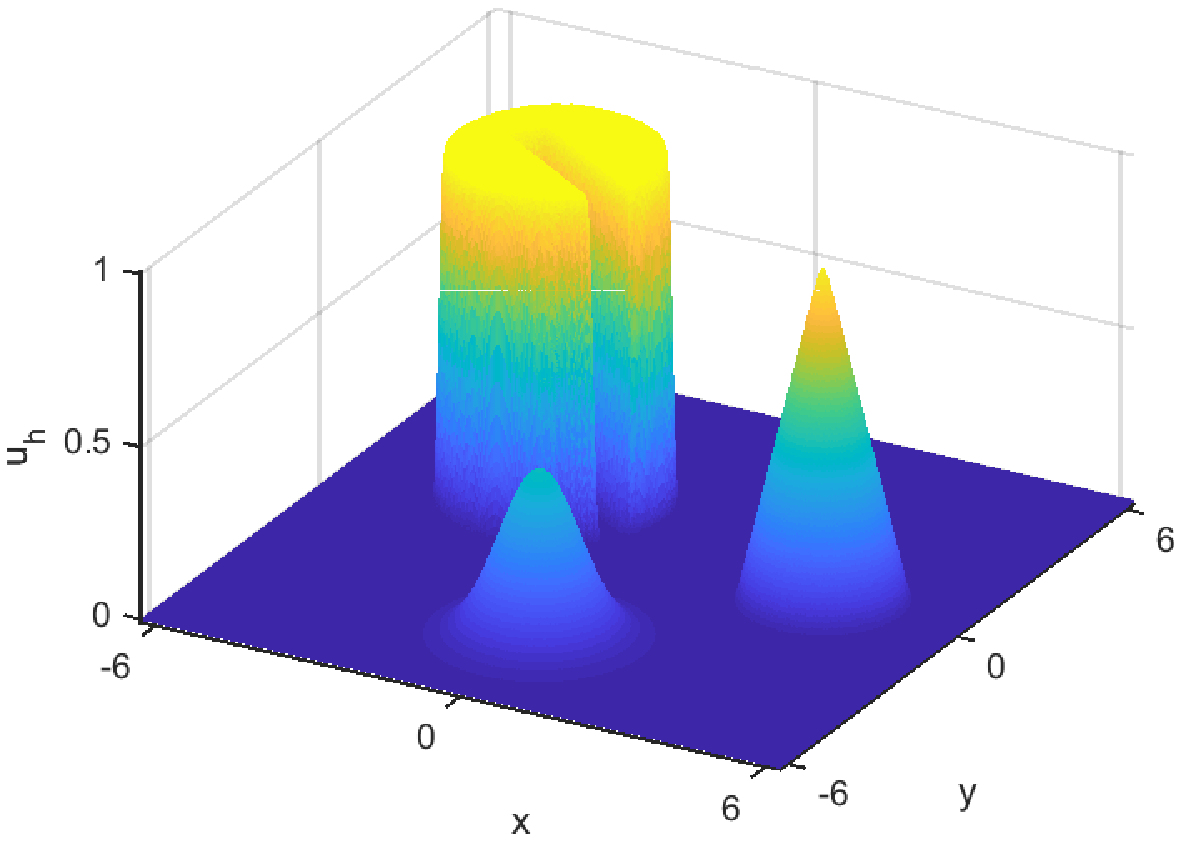}

\includegraphics[width=0.3\textwidth]{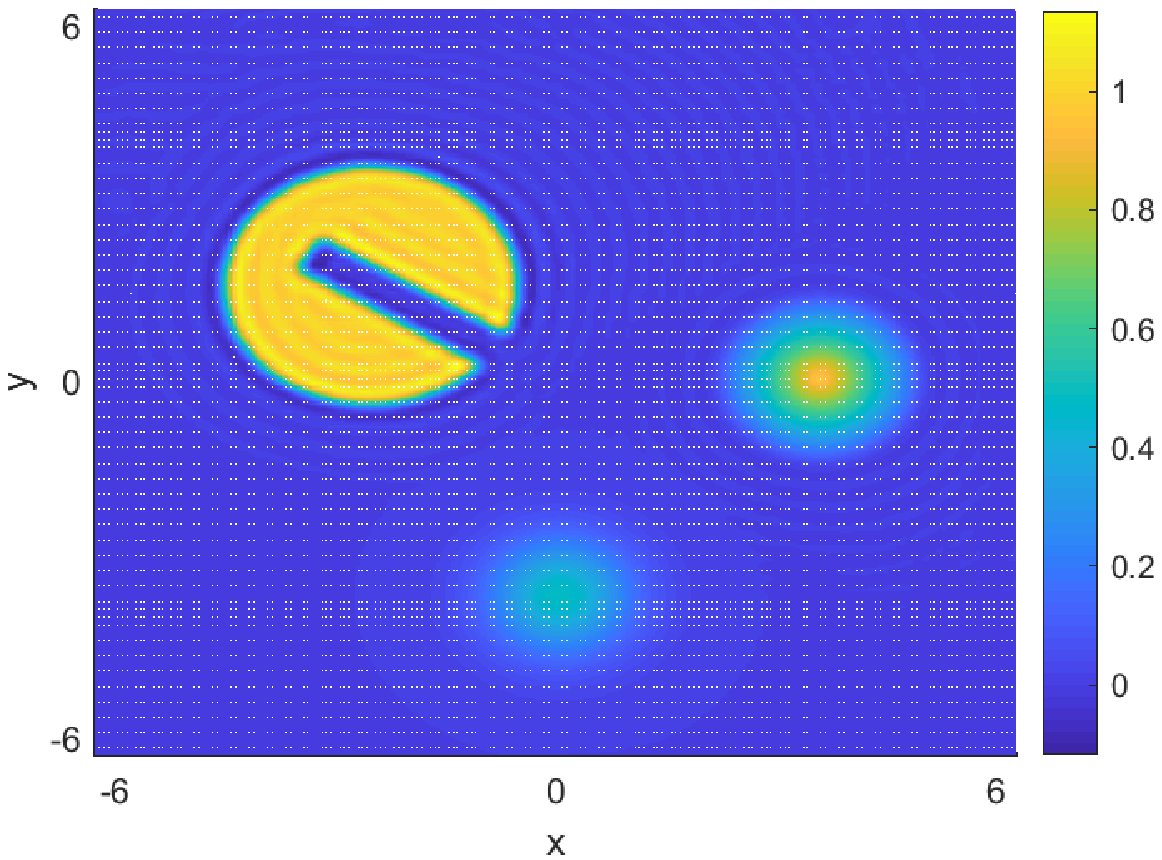}
\includegraphics[width=0.3\textwidth]{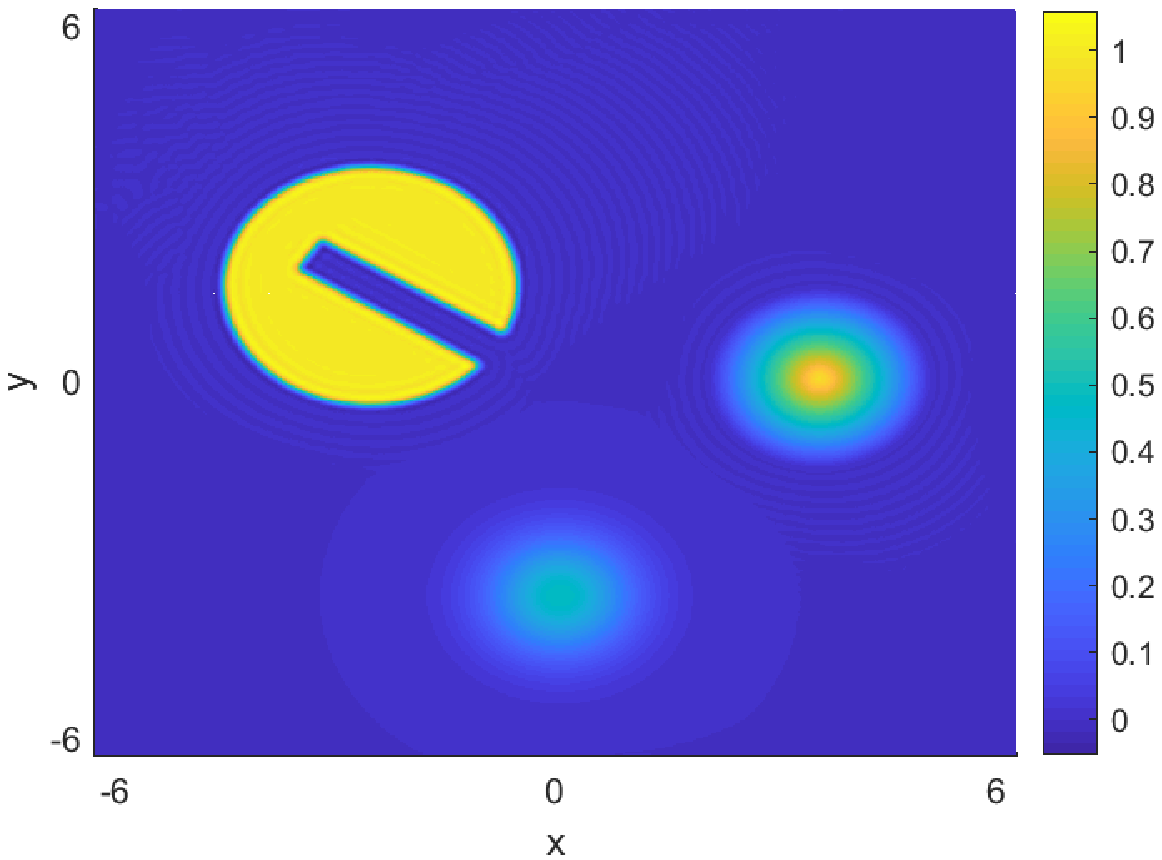}
\includegraphics[width=0.3\textwidth]{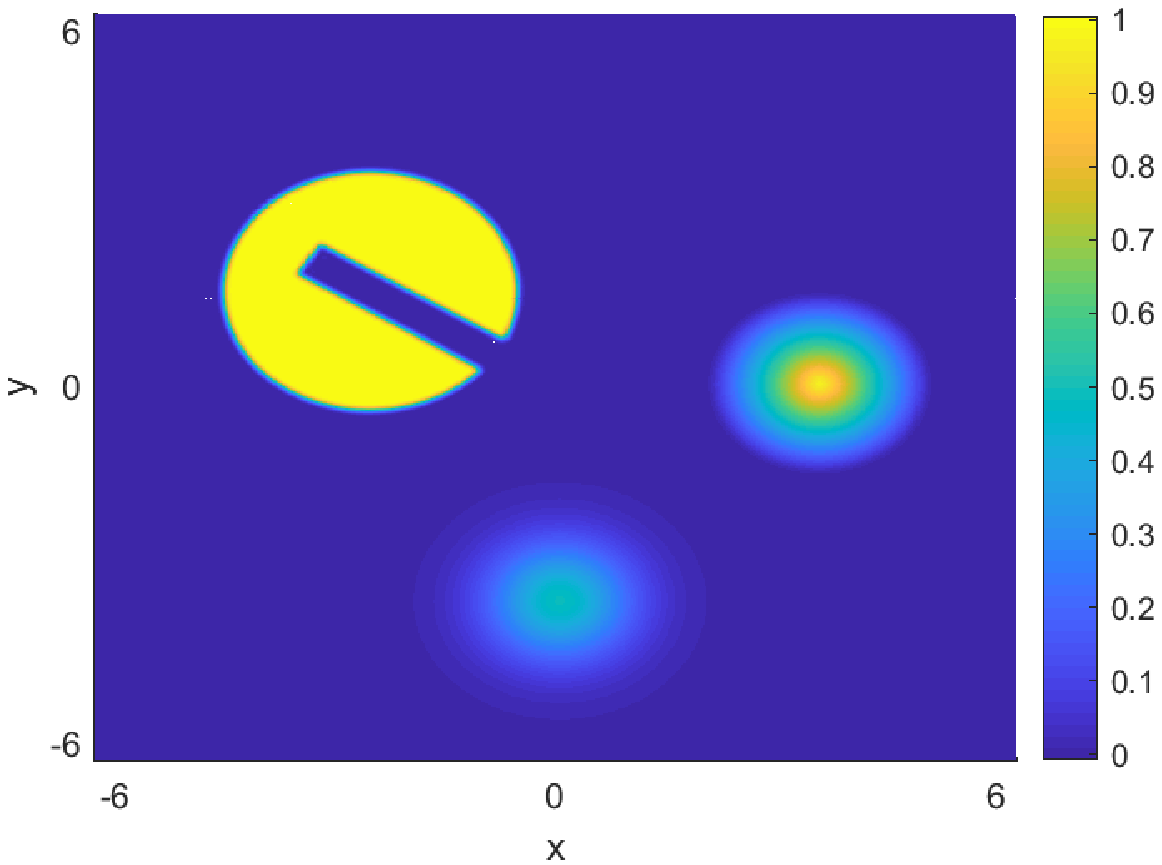}
\caption{The 3D and overlooking 2D image of the numerical solutions for 2D-Test 5. Top two rows: numerical solutions for the third-order method with $N = 50^2, 100^2, 150^2$ (from Left to Right). Bottom two rows: numerical solutions for the sixth-order method.
}
\label{fig:numerical solution 2DTest5}
\end{figure}

\section{Conclusion}\label{Sec:Conclusion}
In this paper, we have constructed the RDG space, which has the same order of accuracy as the standard DG space but uses fewer DoFs, where a narrow stencil reconstruction approach is proposed by using the high-order Legendre moments with the aim of maintaining the local property for the DG methods. Based on this approximation space, we apply the IMEX RK LDG method to the nonlinear CDR equation and present the error estimate in the $L^2$ norm. 
Several numerical experiments are presented to demonstrate the accuracy and performance of our method. The same idea can be applied to the irregular domain and partitions in our ongoing work.


\begin{appendix}
\setcounter{table}{0}
\renewcommand{\thetable}{\thesection.\arabic{table}}
\setcounter{equation}{0}
\renewcommand{\theequation}{\thesection.\arabic{equation}}
\setcounter{figure}{0}
\renewcommand{\thefigure}{\thesection.\arabic{figure}}

\section{The numerical analysis of Assumption \ref{admissible assumption}}\label{The numerical analysis of well-posedness}
Without loss of generality, consider any element $K_\theta\in \mathcal{T}_h$.
The stencil of $K_\theta$ introduced in Section \ref{sec:Compact reconstruction operator} is written as $S^d(K_\theta) = \{K_{\theta^1},K_{\theta^2},\cdots,K_{\theta^{s}}\}$, where $s = 3^d$ is the number of elements in the stencil. For convenience, we define the trace of a multiple indicator $\alpha = (\alpha_1,\cdots, \alpha_n)$ by $tr(\alpha) = \sum_{i = 1}^n\alpha_i$ and sort the set $A^k$ based on the ascendant sequence of the trace, i.e.,
\[
A^k = \{\alpha^1,\alpha^2,\cdots,\alpha^{(k+1)^d}\},
\]
where $tr(\alpha^1)\le tr(\alpha^2)\le,\cdots,\le tr(\alpha^{(k+1)^d})$.

As a result, we can express the approximation $R_{K_\theta}^ku$ as the following Legendre expansion
\[
R_{K_\theta}^ku(\pmb x) = \sum_{j = 1}^{(k+1)^d} u^{\alpha^j} L_{K_\theta}^{\alpha^j}(\pmb x),
\]
where $\pmb{u} = (u^{\alpha^1},u^{\alpha^2},\cdots,u^{\alpha^{(k+1)^d}})^T$ is the unknown coefficient vector to be determined.

For any element $K_{\theta^i}\in S^d(K_\theta)$, let $M_{K_{\theta^i}}^k$ denote an $(m+1)^d\times (k+1)^d$ matrix whose $j$th row is the Legendre moments vector obtained by applying the operator $I_{K_{\theta^i}}^{\alpha^j}$ on the Legendre basis vector $(L^{\alpha^1}_{K_\theta}(\pmb x),L^{\alpha^2}_{K_\theta}(\pmb x),\cdots,L^{\alpha^{(k+1)^d}}_{K_\theta}(\pmb x))$, i.e.,
\begin{equation*}
	m^k_{K_{\theta^i}} =
	\begin{pmatrix}
		I_{K_{\theta^i}}^{\alpha^1}L^{\alpha^1}_{K_\theta}(\pmb x)&I_{K_{\theta^i}}^{\alpha^1}L^{\alpha^2}_{K_\theta}(\pmb x)&\cdots&I_{K_{\theta^i}}^{\alpha^1}L^{\alpha^{(k+1)^d}}_{K_\theta}(\pmb x)\\
		I_{K_{\theta^i}}^{\alpha^2}L^{\alpha^1}_{K_\theta}(\pmb x)&I_{K_{\theta^i}}^{\alpha^2}L^{\alpha^2}_{K_\theta}(\pmb x)&\cdots&I_{K_{\theta^i}}^{\alpha^2}L^{\alpha^{(k+1)^d}}_{K_\theta}(\pmb x)\\
		\vdots&\vdots&&\vdots\\
		I_{K_{\theta^i}}^{\alpha^{(m+1)^d}}L^{\alpha^1}_{K_\theta}(\pmb x)&I_{K_{\theta^i}}^{\alpha^{(m+1)^d}}L^{\alpha^2}_{K_\theta}(\pmb x)&\cdots&I_{K_{\theta^i}}^{\alpha^{(m+1)^d}}L^{\alpha^{(k+1)^d}}_{K_\theta}(\pmb x)
	\end{pmatrix}.
\end{equation*}
Repeating the same process for every elements of the stencil $S^d(K_\theta)$, we can define the coefficient matrix of the linear system \eqref{admissible assumption equation} by
\begin{equation}\label{simple expressation of matrix}
M_{K_\theta}^k = \left(m_{K_{\theta^1}}^T,m_{K_{\theta^2}}^T,\cdots,m_{K_{\theta^s}}^T\right)^T.
\end{equation}
Ultimately, we obtain the matrix version of the system \eqref{admissible assumption equation} as follows:
\begin{equation}\label{matrix of local reconstruction operator}
	M_{K_\theta}^k\pmb{u} = \pmb{0},
\end{equation}
where $\pmb 0$ is the zeros vector.

For this system to have unique zero solution, there are two conditions to be simultaneously satisfied:
\begin{enumerate}
	\item  Matrix $M_{K_\theta}^k$ is a square matrix, i.e., $(k+1)/(m+1) = 3$.
	\item  The determinant of matrix $M_{K_\theta}^k$ must be non-zero.
\end{enumerate}
Under the first condition, let us discuss the second condition. To do that, we need to figure out each element of $M_{K_\theta}^k$. Take the element $I_{K_{\theta^i}}^{\alpha^r}L^{\alpha^l}_{K_\theta}(\pmb x)$ for example, we have
\begin{equation*}	
	\begin{aligned}
		I_{K_{\theta^i}}^{\alpha^r}L^{\alpha^l}_{K_\theta}(\pmb x) & = \left(\prod_{j = 1}^d\frac{2\alpha_j^r+1}{h_{K_{\theta^i}}^j}\right)\int_{K_{\theta^i}}L_{K_{\theta^i}}^{\alpha^r}(\pmb{x})L^{\alpha^l}_{K_\theta}(\pmb x)d\pmb{x}\\
		& = \prod_{j = 1}^d\left(\frac{2\alpha_j^r+1}{2}\int_{-1}^1\widehat{L}^{\alpha_j^r}(\widehat{x})L_{K_\theta}^{\alpha_j^l}\left(\frac{h_{K_{\theta^i}}^j}{2}\widehat{x}+x_{K_{\theta^i}}^j\right)d\widehat{x}\right)\\
		& =
		\prod_{j = 1}^d\left(\frac{2\alpha_j^r+1}{2}\int_{-1}^1\widehat{L}^{\alpha_j^r}(\widehat{x})\widehat{L}^{\alpha_j^l}\left(\frac{h_{K_{\theta^i}}^j}{h_{K_\theta}^j}\widehat{x}+\frac{2}{h_{K_\theta}^j}\left(x_{K_{\theta^i}}^j-x_{K_\theta}^j\right)\right)d\widehat{x}\right)\\
		& = \prod_{j = 1}^d\left(\frac{2\alpha_j^r+1}{2}\int_{-1}^1\widehat{L}^{\alpha_j^r}(\widehat{x})\widehat{L}^{\alpha_j^l}\left(a_{\theta^i}^j\widehat{x}+b_{\theta^i}^j\right)d\widehat{x}\right),
	\end{aligned}
\end{equation*}
where $a_{\theta^i}^j = \frac{h_{K_{\theta^i}}^j}{h_{K_\theta}^j}$ and $b_{\theta^i}^j = \frac{2}{h_{K_\theta}^j}(x_{K_{\theta^i}}^j-x_{K_\theta}^j)$.

Next, we will give numerical proof for different cases. Let us start with the one-dimensional case, i.e., $d = 1$. Where $\theta,\alpha$ are all just single indices. The element $K_\theta$ can be briefly written as $K_j$. Besides,
We have $\alpha^i = i-1$ for $i \ge 1$. For $k = 2$, we have $m = 0$ according to the first condition. When taking the center stencil $S_c^1(K_j) = \{K_{j-1},K_j,K_{j+1}\}$, we have
\begin{equation*}
	\begin{aligned}
		M^2_{K_j} &=
		\begin{pmatrix}
			I^0_{K_{j-1}}L^0_{K_j}(x)&I^0_{K_{j-1}}L^1_{K_j}(x)&I^0_{K_{j-1}}L^2_{K_j}(x) \\
			I^0_{K_{j}}L^0_{K}(x)&I^0_{K_{j}}L^1_{K_j}(x)&I^0_{K_{j}}L^2_{K_j}(x) \\
			I^0_{K_{j+1}}L^0_{K_j}(x)&I^0_{K_{j+1}}L^1_{K_j}(x)&I^0_{K_{j+1}}L^2_{K_j}(x) \\
		\end{pmatrix}\\
	&=
		\begin{pmatrix}
			1&b_{j-1}&\frac{3b_{j-1}^2+a_{j-1}^2-1}{2}\\
			1&0&0 \\
			1&b_{j+1}&\frac{3b_{j+1}^2+a_{j+1}^2-1}{2}\\
		\end{pmatrix}
		.
	\end{aligned}
\end{equation*}

Naturally, the determinant of $M_{K_j}^2$ can be computed as
\begin{equation*}
	\begin{aligned}
&\det(M_{K_j}^2)\\
=& \frac{3b_{j-1}^2b_{j+1} - 3b_{j-1}b_{j+1}^2 - b_{j-1}a_{j+1}^2 + b_{j-1} + b_{j+1}a_{j-1}^2 -b_{j+1}}{2}.
	\end{aligned}
\end{equation*}
To determine the value of $\det(M_{K_j}^2)$, we need more information. First, the regularity of mesh can induce the following condition 
\begin{equation}\label{mesh regular condition}
	0<a_{m}<a_{j-q}<a_{M},\ {\rm for}\ q = 2,1,-1,-2.
\end{equation}
Moreover, by simple calculation, there are $b_{j-1} = -(1+a_{j-1})$, $b_{j+1}=1+a_{j+1}$. With these informations, the following result can be directly obtained
\begin{equation*}	
	\begin{aligned}
		\det(M_{K_j}^2)|_{S_c^1} &= 2(a_{j+1} + 1)(a_{j-1} + 1)(a_{j-1} +a_{j+1} + 1)\\
		&>2(a_m+1)^2(2a_m+1).
	\end{aligned}
\end{equation*}
For the backward stencil $S_b^1(K_j) = \{K_{j-2},K_{j-1},K_{j}\}$, the similar determinant property results from the conditions $b_{j-2} = -(1+2a_{j-1}+a_{j-2}),\ b_{j-1}=-(1+a_{j-1})$ and \ref{mesh regular condition}, which is presented as follows,
\begin{equation*}	
	\begin{aligned}
		\det(M_{K_j}^2)|_{S_b^1} &= 2(a_{j-1}+1)(a_{j-1}+a_{j-2})(a_{j-1}+a_{j-2}+1)\\
		&>4a_m(a_m+1)(2a_m+1).
	\end{aligned}
\end{equation*}
In the same way, with the conditions $b_{j+1} = 1+a_{j+1},\ b_{j+2}=1+2a_{j+1}+a_{j+2}$ and \ref{mesh regular condition}, we can obtain the determinant property for the forward stencil $S_f^1(K_j) = \{K_j,K_{j+1},K_{j+2}\}$ as follows,
\begin{equation*}	
	\begin{aligned}
		\det(M_{K_j}^2)|_{S_f^1} &= 2(a_{j+1}+1)(a_{j+1}+a_{j+2})(a_{j+1}+a_{j+2}+1)\\
		&>4a_m(a_m+1)(2a_m+1).
	\end{aligned}
\end{equation*}

The determinant property for the case of $k = 5$ can be deduced following the proof process for $k = 2$. The results are presented in Table \ref{tab:nonzero determinant property_sixth order}.
\begin{table}[htb]
		\centering
		\renewcommand{\arraystretch}{1.5}
		\begin{tabular}{|c|c|}
			\hline
			stencil& $\det(M_{K_j}^5)$ \\
			\hline
			\multirow{2}{*}{$S_c^1(K_j)$}&$252a_{j-1}a_{j+1}(a_{j+1} + 1)^4 (a_{j-1} + 1)^4(a_{j-1} + a_{j+1} + 1)^3$\\
			&$>252a_m^2(a_m+1)^8(2a_m+1)^4$\\
			\hline
			\multirow{2}{*}{$S_b^1(K_j)$}&$252a_{j-2}a_{j-1}(a_{j-1} + 1)^4 (a_{j-2} + a_{j-1})^4(a_{j-2} + a_{j-1} + 1)^3$\\
			&$>4032a_m^6(a_m+1)^4(2a_m+1)^4$\\
			\hline
			\multirow{2}{*}{$S_f^1(K_j)$}&$252a_{j+1}a_{j+2}(a_{j+1} + 1)^4 (a_{j+1} + a_{j+2})^4(a_{j+1} + a_{j+2} + 1)^3$\\
			&$>4032a_m^6(a_m+1)^4(2a_m+1)^4$\\
			\hline
		\end{tabular}
	\caption{The determinant of $M_{K_j}^5$.}
	\label{tab:nonzero determinant property_sixth order}
\end{table}

Next, turn our attention to the two-dimensional case, i.e., $d = 2$. Here, $\theta,\alpha$ are all double index with the form $(i,j)$. The element $K_\theta$ can be briefly defined as $K_{i,j} = I_i^1\times I_j^2$. For $k = 2$, we have $m = 0$. We recall the definition of the two-dimensional stencil, $S^2(K_{i,j}) = S^1(I^1_i)\times S^1(I^2_j)$. Consequently, three different options for one-dimensional stencil produce nine different stencil strategies for two dimension, which are shown as follows,
\begin{small}
\begin{equation}
	\begin{aligned}
		&S_{cc}^2(K_{i,j}) = S_c^1(I^1_i)\times S_c^1(I^2_j),\ S_{cb}^2(K_{i,j}) = S_c^1(I^1_i)\times S_b^1(I^2_j),\ S_{cf}^2(K_{i,j}) = S_c^1(I^1_i)\times S_f^1(I^2_j),\\
		&S_{bb}^2(K_{i,j}) = S_b^1(I^1_i)\times S_b^1(I^2_j),\ S_{bc}^2(K_{i,j}) = S_b^1(I^1_i)\times S_c^1(I^2_j),\ S_{bf}^2(K_{i,j}) = S_b^1(I^1_i)\times S_f^1(I^2_j),\\
		&S_{ff}^2(K_{i,j}) = S_f^1(I^1_i)\times S_f^1(I^2_j),\ S_{fc}^2(K_{i,j}) = S_f^1(I^1_i)\times S_c^1(I^2_j),\ S_{fb}^2(K_{i,j}) = S_f^1(I^1_i)\times S_b^1(I^2_j).\\
	\end{aligned}
\end{equation}
\end{small}
Take the stencil $S_{cc}^2(K_{i,j})$ as an example, let us illustrate the proof process. We know that the basis functions of $Q^k(S^2(K_{i,j}))$ are the tensor product of the basis functions of $Q^k(S^1(I_i))$ and $Q^k(S^1(I_j))$. Considering the same product form of the stencils, the coefficient matrix can be expressed by the Kronecker product form
\[
M^2_{K_{i,j}} = M^2_{I_i}\otimes M^2_{I_j}.
\]
Given square matrices $A$ and $B$ with degrees $m$ and $n$, there is a well-known determinant conclusion for the Kronecker product \cite{henderson1983history, zhang2013kronecker},
\[
\det(A\otimes B) = \det(A)^n\det(B)^m.
\]
Therefore, we have
\[
\det(M^2_{K_{i,j}}) = \det(M^2_{I_i})^3\det(M^2_{I_j})^3.
\]
Based on the determinant property in one dimension, we can conclude that the same property also holds for every stencil strategies for two dimension. The same result can be obtained for $k = 5$ according to the fact that
\[
\det(M^5_{K_{i,j}}) = \det(M^5_{I_i})^6\det(M^5_{I_j})^6.
\]

\section{Proof of Theorem \ref{Theorem 1}}\label{Proof of Theorem1}
\begin{proof}
	The local reconstruction operator $R_{K}^k$ can be regarded as one interpolation operator. Thus, proving the $k$-exactness property \eqref{k-exactness property for local reconstruction operator} is equivalent to proving the uniqueness of polynomial interpolation problem which has been given by Assumption \ref{admissible assumption}.	
	
	With the $k$-exactness property \eqref{k-exactness property for local reconstruction operator}, the operator $R_{K}^k$ can be regarded as a projection operator which projects the Sobolev space $H^{k+1}(S(K))$ on the polynomial space $Q^k(S(K))$. Considering the $l_\infty$ norm $\Vert\cdot\Vert_{l_\infty(S(K))}$, as defined in Section \ref{Properties for Compact reconstruction operator}, we have the following property
	\begin{equation}\label{projection property}
		\Vert R_{K}^ku\Vert_{l_{\infty}(S(K))}=\Vert u\Vert_{l_{\infty}(S(K))}.
	\end{equation}
	
	Following \cite[Chapter 4.6]{brenner2008mathematical}, there exists an averaged Taylor polynomial $T^{k+1}u\in Q^{k}(S(K))$ such that
	\begin{equation*}
		\Vert u -T^{k+1}u\Vert_{0,\infty,S(K)}\le Ch^{k+1-\frac{d}{2}}.
	\end{equation*}
	With \eqref{norms equivalence}, \eqref{projection property}, and \eqref{upper bound for l_infty discrete norm}, we have
	\begin{equation*}
		\begin{aligned}
			\Vert T^{k+1}u -R_{K}^ku\Vert_{0,\infty,K}&\le C\Vert T^{k+1}u -R_{K}^ku\Vert_{0,\infty,S(K)}=C\Vert R_{K}^k(T^{k+1}u -u)\Vert_{0,\infty,S(K)}\\
			&\le C\Vert R_{K}^k(T^{k+1}u -u)\Vert_{l_{\infty}(S(K))}= C\Vert T^{k+1}u -u\Vert_{l_{\infty}(S(K))}\\
			&\le C\Vert T^{k+1}u -u\Vert_{0,\infty,S(K)}\le Ch^{k+1-\frac{d}{2}}.
		\end{aligned}
	\end{equation*}
	Therefore,
	\begin{equation*}
		\Vert u-R_{K}^ku\Vert_{0,\infty,K}\le \Vert u-T^{k+1}u\Vert_{0,\infty,S(K)}+\Vert T^{k+1}u-R_{K}^ku\Vert_{0,\infty,S(K)}\le Ch^{k+1-\frac{d}{2}},
	\end{equation*}
	which gives \eqref{l_infty error estimate for local reconstruction operator}.
	
	Following \eqref{l_infty error estimate for local reconstruction operator}, the $L^2$ error estimate \eqref{l2 error estimate for local reconstruction operator} can be derived directly as follows,
	\[
	\Vert u_h -R_{K}^ku\Vert_{0,K}\le Ch^{\frac{d}{2}}\Vert u_h -R_{K}^ku\Vert_{0,\infty,K}\le Ch^{k+1}.
	\]
	
	By the approximation property \eqref{Approximation property}, we can choose an approximation polynomial $u_h\in Q^k(S(K)$ such that
	\[
	\Vert u -u_h\Vert_{0,K}+h| u -u_h|_{1,K}\le Ch^{k+1}.
	\]
With \eqref{Approximation property}, \eqref{l2 error estimate for local reconstruction operator}, and the inverse property \eqref{inverse properties},	we can derive \eqref{sime norm error estimate for local reconstruction operator} smoothly
	\begin{equation*}
		\begin{aligned}
			| u -R_{K}^ku|_{1,K}&\le | u -u_h|_{1,K}+| u_h -R_{K}^ku|_{1,K}\\
			&\le Ch^k+Ch^{-1}\Vert u_h -R_{K}^ku\Vert_{0,K}\\
			&\le Ch^k+Ch^{-1}\Vert u -u_h\Vert_{0,K}+Ch^{-1}\Vert u -R_{K}^ku\Vert_{0,K}\\
			&\le Ch^k.
		\end{aligned}
	\end{equation*}
	Here, the proof is completed.
\end{proof}
\section{Proof of Theorem \ref{l2 estimate for the LDG}}\label{Proof of Theorem3}
\begin{proof}
Given that the exact solution $u$ and $\pmb q$ also satisfies the global weak formulation \eqref{global weak formulation(gWF)}, we obtain the following error equation by a simple subtraction
\begin{equation*}
\begin{aligned}
&((u-u_h)_t,v_h)+\sum_{i=1}^d(q^i-q_h^i,p_h^i)\\
&=\mathcal{H}(u,v_h)-\mathcal{H}(u_h,v_h)+\mathcal{L}(\pmb q-\pmb q_h,v_h)+\sum_{i=1}^d\mathcal{K}^i(u-u_h,p_h^i)+\mathcal{R}(u,v_h)-\mathcal{R}(u_h,v_h).
	\end{aligned}
\end{equation*}
Here, we denote
\begin{equation*}
\begin{aligned}
\mathcal{K}(u-u_h,\pmb p_h)& =\sum_{i=1}^d\mathcal{K}^i(u-u_h,p_h^i)\\
&= -\sqrt{\varepsilon}\sum_{i=1}^d\left((u-u_h,(p_h^i)_{x_i})_K-\langle u-\hat{u}_h, p_h^in_i\rangle_{\partial K}\right)\\
&=-\sqrt{\varepsilon}\left((u-u_h,\nabla\cdot \pmb p_h)_K-\langle u-\hat{u}_h, \pmb p_h\cdot \pmb n\rangle_{\partial K}\right).
	\end{aligned}
\end{equation*}
Thus a more neat error equation can be described as follows,
\begin{equation}\label{error equation}
\begin{aligned}
((u-u_h)_t,v_h)+(\pmb q-\pmb q_h,\pmb p_h)=&\mathcal{H}(u,v_h)-\mathcal{H}(u_h,v_h)+\mathcal{L}(\pmb q-\pmb q_h,v_h)+\mathcal{K}(u-u_h,\pmb p_h)\\
&+\mathcal{R}(u-u_h,v_h).
	\end{aligned}
\end{equation}
Denote
\begin{equation}\label{projection error}
\xi = R^ku-u_h,\ \xi^e = R^ku-u,\ \pmb\eta = R^k\pmb q-\pmb q_h,\ \pmb\eta^e = R^k\pmb q-\pmb q,\
\end{equation}
and take the test function as
\begin{equation}\label{test function}
v_h = \xi,\ \pmb p_h = \pmb\eta.
\end{equation}
We obtain the energy equality
 \begin{equation}\label{energy equality}
((\xi-\xi^e)_t,\xi)+(\pmb\eta-\pmb\eta^e,\pmb\eta)=\mathcal{H}(u,\xi)-\mathcal{H}(u_h,\xi)+\mathcal{L}(\pmb q-\pmb q_h,\xi)+\mathcal{K}(u-u_h,\pmb\eta)+\mathcal{R}(u-u_h,\xi).
\end{equation}
Next we will estimate each term on the right-hand side of the energy equality.

To estimate the nonlinear convection term $\mathcal{H}(u,\xi)-\mathcal{H}(u_h,\xi)$, we need to make $a\ priori$ assumption, for small enough $h$, we have
\begin{equation}\label{a priori assumption}
\Vert u-u_h\Vert_{0}\le Ch^{\frac{d+1}{2}}.
\end{equation}
With this assumption, we have $\Vert u-u_h\Vert_{0,\infty}\le Ch^{\frac{1}{2}}$. Moreover, the property \eqref{boundary norm and infty norm estimate for global reconstruction operator} implies that $\Vert R^ku-u_h\Vert_{0,\infty}\le Ch^{\frac{1}{2}}$.

Here, we have
 \begin{equation*}
 \begin{aligned}
&\mathcal{H}(u,\xi)-\mathcal{H}(u_h,\xi) \\
= &\sum_{K\in \mathcal{T}_h}\left((\pmb b(f(u)-f(u_h)),\nabla \xi)_K-\langle\pmb b(f(u)-\hat{f}(u_h))\cdot \pmb n,\xi\rangle_{\partial K}\right)\\
=&(\pmb b(f(u)-f(u_h)),\nabla\cdot\xi)-\langle \pmb b(f(u)-\hat{f}(u_h)),[\xi]\rangle\\
=&(\pmb b(f(u)-f(u_h)),\nabla\cdot\xi)-\langle \pmb b(f(u)-f(\bar{u}_h)),[\xi]\rangle+\langle \pmb b(f(\bar{u}_h)-\hat{f}(u_h)),[\xi]\rangle\\
=&I+II.
\end{aligned}
\end{equation*}
According to \cite[Lemma 3.4]{xu2007error}, the second part can be estimated as
 \begin{equation*}
II = \langle \pmb b(f(\bar{u}_h)-\hat{f}(u_h)),[\xi]\rangle\le -\frac{3}{4}\Vert\pmb b\Vert_{1,\infty}\langle \alpha(\hat{f};u_h),[\xi]^2\rangle+Ch^{2k+1}.
 \end{equation*}
Where $\alpha(\hat{f};u_h)$ is non-negative and bounded, which is defined in \cite{zhang2004error}.
Next, we estimate the first part $I$. Follow \cite[Appendix A.1]{xu2007error}, $I$ can be divided into six parts as follows,
\begin{equation*}
 \begin{aligned}
I_1 &= (\pmb bf'(u)\xi,\nabla\cdot\xi)+\langle \pmb bf'(u)\bar{\xi},[\xi]\rangle,\\
I_2 &= \frac{1}{2}\left((\pmb bf''(u)\xi^2,\nabla\cdot\xi)+\langle \pmb bf''(u)\bar{\xi}^2,[\xi]\rangle\right),\\
I_3 &= -\left((\pmb bf'(u)\xi^e,\nabla\cdot\xi)+\langle \pmb bf'(u)\bar{\xi}^e,[\xi]\rangle\right),\\
I_4 &= (\pmb bf''(u)\xi^e\xi,\nabla\cdot\xi)+\langle \pmb bf''(u)\bar{\xi}^e\bar{\xi},[\xi]\rangle,\\
I_5 &= -\frac{1}{2}\left((\pmb bf''(u)(\xi^e)^2,\nabla\cdot\xi)+\langle \pmb bf''(u)(\bar{\xi}^e)^2,[\xi]\rangle\right),\\
I_6 &= \frac{1}{6}\left((\pmb bf_u'''(\xi-\xi^e)^3,\nabla\cdot\xi)+\langle \pmb b\tilde{f}_u'''(\bar{\xi}-\bar{\xi}^e)^3,[\xi]\rangle\right),
\end{aligned}
\end{equation*}
where $f_u'''$ and $\tilde{f}_u'''$ are the factors in the remainder of Taylor expansion of $f(u_h)$ and $f(\bar{u}_h)$ separately. By integration by parts, we can estimate each term now:
\begin{itemize}
\item $I_1$ term\\
\begin{equation*}
I_1 = \frac{1}{2}(\nabla \cdot (\pmb bf'(u)),\xi^2)\le C\Vert\xi\Vert_{0}^2.
\end{equation*}
\item $I_2$ term\\
\begin{equation*}
I_2 = \frac{1}{6}(\nabla \cdot(\pmb b f''(u))\xi,\xi^2)+\frac{1}{24}\langle \pmb bf''(u)[\xi],[\xi]^2\rangle.\\
\end{equation*}
By a Taylor expansion, we have
\begin{equation*}
 \begin{aligned}
f''(u)[\xi] &= f''(u_h)[\xi] +f''_u(u-u_h)[\xi]\\
&=-f''(u_h)[u_h]+f''(u_h)[\xi^e]+f''_u(u-u_h)[\xi]\\
&\le 8\alpha(\hat{f};u_h)+8|[u-u_h]|^2+f''(u_h)[\xi^e]+f''_u(u-u_h)[\xi].
\end{aligned}
\end{equation*}
Thus,
\begin{equation*}
 \begin{aligned}
I_2 \le& \frac{1}{3}\Vert \pmb b\Vert_{1,\infty}\langle \alpha(\hat{f};u_h),[\xi]^2\rangle\\
&+C\left(\Vert\xi\Vert_{0,\infty}+h^{-1}(\Vert u-u_h\Vert_{0,\infty}^2+\Vert\xi^e\Vert_{0,\mathcal{E}_h}^2+\Vert\xi\Vert_{0,\mathcal{E}_h}^2)\right)\Vert\xi\Vert_{0}\\
\le&\frac{1}{3}\Vert \pmb b\Vert_{1,\infty}\langle \alpha(\hat{f};u_h),[\xi]^2\rangle+C\Vert\xi\Vert_{0}.
\end{aligned}
\end{equation*}
\item $I_3$ term\\
\begin{equation*}
 \begin{aligned}
I_3 &=
-(\pmb bf'(u)\xi^e,\nabla\cdot\xi)-\langle \pmb b(f'(u)-f'(\bar{u}_h))\bar{\xi}^e,[\xi]\rangle-\langle \pmb bf'(\bar{u}_h)\bar{\xi}^e,[\xi]\rangle\\
&\le C \Vert\xi\Vert_{0} +2\langle\pmb b (\alpha(\hat{f};u_h)+C|[u-u_h]|)\bar{\xi}^e,[\xi]\rangle+Ch^{2k}\\
&\le \frac{1}{6}\Vert \pmb b\Vert_{1,\infty}\langle \alpha(\hat{f};u_h),[\xi]^2\rangle+C\Vert\xi\Vert_{0}+Ch^{2k}.
\end{aligned}
\end{equation*}
\item $I_4$, $I_5$, and $I_6$ terms\\
\begin{equation*}
 \begin{aligned}
I_4 &\le Ch^{-1}\Vert\xi^e\Vert_{0,\infty}\Vert\xi\Vert_{0}\le C\Vert\xi\Vert_{0},\\
I_5 &\le C\Vert\xi^e\Vert_{0,\infty}(h^{-1}\Vert\xi^e\Vert_{0}+h^{-\frac{1}{2}}\Vert\xi^e\Vert_{0,\mathcal{E}_h})\Vert\xi\Vert_{0}\le C\Vert\xi\Vert_{0}+Ch^{2k+2},\\
I_6 &\le C\Vert u-u_h\Vert_{0,\infty}(h^{-1}(\Vert\xi\Vert_{0}+\Vert\xi^e\Vert_{0})+h^{-\frac{1}{2}}\Vert\xi^e\Vert_{0,\mathcal{E}_h})\Vert\xi\Vert_{0}\\
&\le C\Vert\xi\Vert_{0}+Ch^{2k+1}.
\end{aligned}
\end{equation*}
\end{itemize}
Combing with all the above estimates, we can derive that
 \begin{equation*}
I \le \frac{1}{2}\Vert \pmb b\Vert_{1,\infty}\langle \alpha(\hat{f};u_h),[\xi]^2\rangle+C\Vert\xi\Vert_{0}+Ch^{2k}.
 \end{equation*}
Naturally, we have the following estimate
  \begin{equation*}
\mathcal{H}(u,\xi)-\mathcal{H}(u_h,\xi) \le -\frac{1}{4}\Vert \pmb b\Vert_{1,\infty}\langle \alpha(\hat{f};u_h),[\xi]^2\rangle+C\Vert\xi\Vert_{0}+Ch^{2k}.
\end{equation*}

Then we focus on the diffusion part. We have
\begin{equation*}
 \begin{aligned}
&\mathcal{L}(\pmb q-\pmb q_h,\xi)+\mathcal{K}(u-u_h,\pmb\eta)\\
=&-\sqrt{\varepsilon}\left((\pmb q-\pmb q_h,\nabla \xi)+(u-u_h,\nabla\cdot \pmb \eta)-\langle \pmb q-\hat{\pmb q}_h\cdot \pmb n^+,[\xi]\rangle+\langle u-\hat{u}_h, [\pmb \eta]\cdot \pmb n^+\rangle\right)\\
=&-\sqrt{\varepsilon}\left((\pmb \eta-\pmb \eta^e,\nabla \xi)+(\xi-\xi^e,\nabla\cdot \pmb \eta)-\langle \hat{\pmb \eta}-\hat{\pmb \eta}^e\cdot \pmb n^+,[\xi]\rangle+\langle \hat{\xi}-\hat{\xi}^e, [\pmb \eta]\cdot \pmb n^+\rangle\right)\\
=&\sqrt{\varepsilon}\left((\pmb \eta^e,\nabla \xi)+(\xi^e,\nabla\cdot \pmb \eta)-\langle \hat{\pmb \eta}^e\cdot \pmb n^+,[\xi]\rangle+\langle \hat{\xi}^e, [\pmb \eta]\cdot \pmb n^+\rangle\right)\\
\le& \frac{1}{2}\Vert\pmb \eta\Vert_{0}+C\Vert\xi\Vert_{0}+Ch^{2k}.
\end{aligned}
\end{equation*}

Finally, we estimate the reaction part as follows,
  \begin{equation*}
\mathcal{R}(u,\xi)-\mathcal{R}(u_h,\xi)=-(r(u)-r(u_h),\xi)=-(r'_u(\xi-\xi^e),\xi)\le C\Vert\xi\Vert_{0}+Ch^{2k+2}.
\end{equation*}
With the above estimates and Young's inequality, the energy equation becomes
  \begin{equation*}
 \begin{aligned}
\frac{1}{2}\left(\frac{d\Vert\xi\Vert_0^2}{dt}+\Vert\pmb\eta\Vert_0^2\right)+\frac{1}{4}\Vert \pmb b\Vert_{1,\infty}\langle \alpha(\hat{f};u_h),[\xi]^2\rangle&\le (\xi^e_t,\xi)+(\pmb \eta^e,\pmb \eta)+C\Vert\xi\Vert_{0}+Ch^{2k}\\
&\le C\Vert\xi\Vert_{0}+\frac{1}{4}\Vert\pmb \eta\Vert_{0}+Ch^{2k}.
\end{aligned}
\end{equation*}
Thus,
\begin{equation*}
\frac{1}{2}\frac{d\Vert\xi\Vert_0^2}{dt}+\frac{1}{4}\Vert\pmb\eta\Vert_0^2+\frac{1}{4}\Vert \pmb b\Vert_{1,\infty}\langle \alpha(\hat{f};u_h),[\xi]^2\rangle\le C\Vert\xi\Vert_{0}+Ch^{2k}.
\end{equation*}
By the Gronwall inequality and triangle inequality, the proof is completed as follows,
\begin{equation*}
\Vert u-u_h\Vert_0^2+\int_{0}^T\Vert\pmb q-\pmb q_h\Vert_0^2(t)dt\le Ch^{2k}.
\end{equation*}
\end{proof}
\end{appendix}

\bibliographystyle{abbrv}
\bibliography{ref}

\begin{thebibliography}{10}

\bibitem{abramowitz1964handbook}
M.~Abramowitz and I.~A. Stegun.
\newblock {\em Handbook of mathematical functions with formulas, graphs, and
  mathematical tables}.
\newblock US Government printing office, 1964.

\bibitem{ayuso2009discontinuous}
B.~Ayuso and L.~D. Marini.
\newblock {Discontinuous Galerkin methods for advection-diffusion-reaction
  problems}.
\newblock {\em SIAM Journal on Numerical Analysis}, 47(2):1391--1420, 2009.

\bibitem{baysal2012stabilized}
O.~Baysal.
\newblock {\em Stabilized finite element methods for time dependent
  convection-diffusion equations}.
\newblock Izmir Institute of Technology (Turkey), 2012.

\bibitem{brenner2008mathematical}
S.~C. Brenner, L.~R. Scott, and L.~R. Scott.
\newblock {\em The mathematical theory of finite element methods}.
\newblock Springer, 2008.

\bibitem{brooks1982streamline}
A.~N. Brooks and T.~J. Hughes.
\newblock {Streamline upwind/Petrov-Galerkin formulations for convection
  dominated flows with particular emphasis on the incompressible Navier-Stokes
  equations}.
\newblock {\em Computer methods in applied mechanics and engineering},
  32(1-3):199--259, 1982.

\bibitem{calvo2001linearly}
M.~Calvo, J.~De~Frutos, and J.~Novo.
\newblock {Linearly implicit Runge--Kutta methods for
  advection--reaction--diffusion equations}.
\newblock {\em Applied Numerical Mathematics}, 37(4):535--549, 2001.

\bibitem{ciarlet2002finite}
P.~G. Ciarlet.
\newblock {\em The finite element method for elliptic problems}.
\newblock SIAM, 2002.

\bibitem{cockburn2009hybridizable}
B.~Cockburn, B.~Dong, J.~Guzm{\'a}n, M.~Restelli, and R.~Sacco.
\newblock {A hybridizable discontinuous Galerkin method for steady-state
  convection-diffusion-reaction problems}.
\newblock {\em SIAM Journal on Scientific Computing}, 31(5):3827--3846, 2009.

\bibitem{cockburn2000development}
B.~Cockburn, G.~E. Karniadakis, and C.-W. Shu.
\newblock {The development of discontinuous Galerkin methods}.
\newblock In {\em Discontinuous Galerkin Methods}, pages 3--50. Springer, 2000.

\bibitem{cockburn1989tvb}
B.~Cockburn and C.-W. Shu.
\newblock {TVB Runge-Kutta local projection discontinuous Galerkin finite
  element method for conservation laws. II. General framework}.
\newblock {\em Mathematics of computation}, 52(186):411--435, 1989.

\bibitem{cockburn1998local}
B.~Cockburn and C.-W. Shu.
\newblock {The local discontinuous Galerkin method for time-dependent
  convection-diffusion systems}.
\newblock {\em SIAM journal on numerical analysis}, 35(6):2440--2463, 1998.

\bibitem{cockburn2001runge}
B.~Cockburn and C.-W. Shu.
\newblock {Runge-Kutta discontinuous Galerkin methods for convection-dominated
  problems}.
\newblock {\em Journal of scientific computing}, 16(3):173--261, 2001.

\bibitem{da2014mimetic}
L.~B. da~Veiga, K.~Lipnikov, and G.~Manzini.
\newblock {\em The mimetic finite difference method for elliptic problems}.
\newblock Springer, 2014.

\bibitem{ding2020semi}
M.~Ding, X.~Cai, W.~Guo, and J.-M. Qiu.
\newblock {A semi-Lagrangian discontinuous Galerkin (DG)--local DG method for
  solving convection-diffusion equations}.
\newblock {\em Journal of Computational Physics}, 409:109295, 2020.

\bibitem{henderson1983history}
H.~V. Henderson, F.~Pukelsheim, and S.~R. Searle.
\newblock On the history of the {K}ronecker product.
\newblock {\em Linear and Multilinear Algebra}, 14(2):113--120, 1983.

\bibitem{houston2002discontinuous}
P.~Houston, C.~Schwab, and E.~S{\"u}li.
\newblock Discontinuous hp-finite element methods for
  advection-diffusion-reaction problems.
\newblock {\em SIAM Journal on Numerical Analysis}, 39(6):2133--2163, 2002.

\bibitem{hughes2000comparison}
T.~J. Hughes, G.~Engel, L.~Mazzei, and M.~G. Larson.
\newblock {A comparison of discontinuous and continuous Galerkin methods based
  on error estimates, conservation, robustness and efficiency}.
\newblock In {\em Discontinuous Galerkin Methods}, pages 135--146. Springer,
  2000.

\bibitem{leveque2007finite}
R.~J. LeVeque.
\newblock {\em Finite difference methods for ordinary and partial differential
  equations: steady-state and time-dependent problems}.
\newblock SIAM, 2007.

\bibitem{li2017h}
J.~Li, S.~Zhai, Z.~Weng, and X.~Feng.
\newblock {H-adaptive RBF-FD method for the high-dimensional
  convection-diffusion equation}.
\newblock {\em International Communications in Heat and Mass Transfer},
  89:139--146, 2017.

\bibitem{li2020least}
R.~Li and F.~Yang.
\newblock A least squares method for linear elasticity using a patch
  reconstructed space.
\newblock {\em Computer Methods in Applied Mechanics and Engineering},
  363:112902, 2020.

\bibitem{nguyen2009implicit}
N.~C. Nguyen, J.~Peraire, and B.~Cockburn.
\newblock {An implicit high-order hybridizable discontinuous Galerkin method
  for nonlinear convection--diffusion equations}.
\newblock {\em Journal of Computational Physics}, 228(23):8841--8855, 2009.

\bibitem{rashidinia2018stable}
J.~Rashidinia, M.~Khasi, and G.~E. Fasshauer.
\newblock {A stable Gaussian radial basis function method for solving nonlinear
  unsteady convection-diffusion-reaction equations}.
\newblock {\em Computers \& Mathematics with Applications}, 75(5):1831--1850,
  2018.

\bibitem{roos2008robust}
H.-G. Roos, M.~Stynes, and L.~Tobiska.
\newblock {\em Robust numerical methods for singularly perturbed differential
  equations: convection-diffusion-reaction and flow problems}.
\newblock Springer Science \& Business Media, 2008.

\bibitem{safdari2015radial}
A.~Safdari-Vaighani, A.~Heryudono, and E.~Larsson.
\newblock A radial basis function partition of unity collocation method for
  convection-diffusion equations arising in financial applications.
\newblock {\em Journal of Scientific Computing}, 64(2):341--367, 2015.

\bibitem{sun2020discontinuous}
Z.~Sun, J.~Liu, and P.~Wang.
\newblock {A discontinuous Galerkin method by patch reconstruction for
  convection-diffusion problems}.
\newblock {\em Adv. Appl. Math. Mech}, 12(3):729--747, 2020.

\bibitem{wang2015stability}
H.~Wang, C.-W. Shu, and Q.~Zhang.
\newblock {Stability and error estimates of local discontinuous Galerkin
  methods with implicit-explicit time-marching for advection-diffusion
  problems}.
\newblock {\em SIAM Journal on Numerical Analysis}, 53(1):206--227, 2015.

\bibitem{wang2016local}
H.~Wang, S.~Wang, Q.~Zhang, and C.-W. Shu.
\newblock {Local discontinuous Galerkin methods with implicit-explicit
  time-marching for multi-dimensional convection-diffusion problems}.
\newblock {\em ESAIM: Mathematical Modelling and Numerical Analysis},
  50(4):1083--1105, 2016.

\bibitem{wang2018third}
H.~Wang, Q.~Zhang, and C.-W. Shu.
\newblock {Third order implicit--explicit Runge--Kutta local discontinuous
  Galerkin methods with suitable boundary treatment for convection--diffusion
  problems with Dirichlet boundary conditions}.
\newblock {\em Journal of Computational and Applied Mathematics}, 342:164--179,
  2018.

\bibitem{xu2007error}
Y.~Xu and C.-W. Shu.
\newblock {Error estimates of the semi-discrete local discontinuous Galerkin
  method for nonlinear convection--diffusion and KdV equations}.
\newblock {\em Computer methods in applied mechanics and engineering},
  196(37-40):3805--3822, 2007.

\bibitem{yap2005efficient}
P.-T. Yap and R.~Paramesran.
\newblock {An efficient method for the computation of Legendre moments}.
\newblock {\em IEEE Transactions on Pattern Analysis and Machine Intelligence},
  27(12):1996--2002, 2005.

\bibitem{zhang2013kronecker}
H.~Zhang and F.~Ding.
\newblock On the {K}ronecker products and their applications.
\newblock {\em Journal of Applied Mathematics}, 2013, 2013.

\bibitem{zhang2004error}
Q.~Zhang and C.-W. Shu.
\newblock {Error estimates to smooth solutions of Runge--Kutta discontinuous
  Galerkin methods for scalar conservation laws}.
\newblock {\em SIAM Journal on Numerical Analysis}, 42(2):641--666, 2004.

\end{thebibliography}
\end{document}